\newtheorem{theorem}{Theorem}
\newtheorem{lemma}[theorem]{Lemma}
\newtheorem{corollary}[theorem]{Corollary}
\theoremstyle{definition}
\newtheorem{definition}[theorem]{Definition}
\theoremstyle{remark}
\newtheorem{remark}[theorem]{Remark}
\def\R{\mathbb{R}}
\def\pscal#1#2{\left\langle#1,\,#2\right\rangle}
\def \e{\varepsilon}
\newcommand{\nplap}{\Delta^N_p}
\DeclareMathOperator{\Trace}{Tr}
\begin{document}

\title[Normalized $p$--Laplacian]%
{On the first eigenvalue\\ of the normalized p-Laplacian}%
\author[G.~Crasta, I.~Fragal\`a, B.~Kawohl]{Graziano Crasta,  Ilaria Fragal\`a, Bernd Kawohl}
\address[Graziano Crasta]{Dipartimento di Matematica ``G.\ Castelnuovo'', Univ.\ di Roma I\\
P.le A.\ Moro 2 -- 00185 Roma (Italy)}
\email{crasta@mat.uniroma1.it}

\address[Ilaria Fragal\`a]{
Dipartimento di Matematica, Politecnico\\
Piazza Leonardo da Vinci, 32 --20133 Milano (Italy)
}
\email{ilaria.fragala@polimi.it}

\address[Bernd Kawohl]{Mathematisches Institut, Universit\"at zu K\"oln, 50923 K\"oln (Germany)
}
\email{kawohl@math.uni-koeln.de}

\keywords{Normalized $p$-Laplacian, viscosity solutions, eigenvalue problem. }

\subjclass[2010]{49K20, 35J60, 47J10.}

\date{November 25, 2018}

\begin{abstract} We prove that, if $\Omega$ is an open bounded domain with smooth and connected boundary, for every $p \in (1,  + \infty)$ the first Dirichlet eigenvalue of the normalized $p$-Laplacian is simple in the sense that two positive eigenfunctions are necessarily multiple of each other. We also give a (non-optimal) lower bound  for the eigenvalue in terms of the measure of $\Omega$, 
and we address the open problem of proving a Faber-Krahn type inequality with balls as optimal domains. 
\end{abstract}

\maketitle

\medskip
\section{Introduction and statement of the results}

Given an open bounded subset $\Omega$ of $\R^n$, 
we consider the following eigenvalue problem
\begin{equation}\label{f:pb1} \begin{cases}
 - \nplap u = \lambda_p u & \text{ in } \Omega
 \\  \noalign{\medskip}
 u = 0 & \text{ on } \partial \Omega\,,
 \end{cases}
 \end{equation}
where $\nplap$ denotes the normalized or game-theoretic $p$-Laplacian, defined for any $p \in (1, + \infty)$ by 
\[
\begin{array}{ll} 
\nplap u & \displaystyle := \frac{1}{p} |\nabla u| ^ {p-2}  {\rm div} \big (|\nabla u| ^ {p-2} \nabla u \big )  
\\ \noalign{\medskip} 
& \displaystyle 
= \frac{p-2}{p} |\nabla u|^{-2} \pscal{\nabla ^2 u\, \nabla u}{\nabla u} + \frac{1}{p}\, \Trace(\nabla ^2 u)\,,
\end{array}
\]
 where $\nabla^2u$  stands for the Hessian of $u$.
Equivalently, see \cite{K0}, it can be defined as a convex combination of the limit operators 
as $p \to 1$ and $p \to + \infty$, since
\begin{equation}\label{convexcomb}\nplap u =\frac{p-1}{p} \Delta _\infty ^N u +  \frac{1}{p}  \Delta _1 ^N u  \,,
\end{equation} 
with 
$$
\Delta _\infty ^N u = \frac{1}{|\nabla u| ^ 2} \pscal{\nabla ^ 2 u \nabla u} {\nabla u}\ 
\hbox{ and } \, 
\Delta _1 ^N u := |\nabla u|\, {\rm div} \Big ( \frac{\nabla u} {|\nabla u|} \Big )\, . $$ 
Let us point out that solutions to \eqref{f:pb1} are in general not classical, i.e. of class $C^2$, but have to be understood as viscosity solutions and these are defined in Section \ref{sec:proofs}. 

The normalized $p$-Laplacian has recently received increasing attention, partly because of its application in image processing \cite{K0,Does} and in the description of tug-of-war games (see \cite{PSSW1, PSSW2}).  
Without claiming to be complete we list \cite{BK18,CFd,CFe,CFf,CF7,EKNT, JK, Juut07, KH, K11, kuhn , MPR1, MPR2} for some related works. 

 Following Berestycki, Nirenberg, and Varadhan \cite{BNV},  
in the paper \cite{BiDe2006} (where actually they deal with a wider class of operators),  
Birindelli and Demengel introduced  the  {\it first eigenvalue of $\nplap$ in $\Omega$} as
 $$\overline \lambda _p (\Omega) := \sup \Big \{ \lambda_p \in \R \ :\  \exists u >0 \text{ such that } \ 
 \nplap u + \lambda _p u \leq 0\ \text{ in the viscosity sense}
 \Big \}\,.$$ 
They proved that calling it first eigenvalue is justified, see \cite[Theorems 1.3 and 1.4]{BiDe2006}.  
In particular they showed that there exists a positive eigenfunction associated with
$\overline \lambda _p (\Omega)$. In other words for $\lambda _p = \overline \lambda _p (\Omega)$
problem \eqref{f:pb1} admits a positive viscosity solution.  
They also posed the open problem to determine whether $\overline \lambda _p (\Omega)$ is  simple.
We show that the answer is affirmative.  More precisely, we prove:

\begin{theorem}\label{t:simple}
Let $\Omega$ be an open bounded domain in $\R ^n$, with $\partial \Omega$ smooth and connected. 
If $u$ and $v$ are two positive eigenfunctions associated with 
$\overline \lambda _p (\Omega)$, then $u$ and $v$ are proportional, that is there exists $t \in \R _+$ such that
$u = tv $ in $\Omega$.
 \end{theorem}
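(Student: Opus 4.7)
The plan is to adapt to the viscosity, non-variational setting the classical "touching" argument for simplicity, using in a crucial way the fact that $\nplap$ is positively $1$-homogeneous: if $u$ solves $-\nplap u = \overline{\lambda}_p(\Omega)\, u$ in the viscosity sense, then so does $tu$ for every $t > 0$. Consequently the set of positive eigenfunctions associated with $\overline{\lambda}_p(\Omega)$ is a positive cone, and simplicity amounts to showing that for any pair of positive eigenfunctions $(u,v)$ one has $u \equiv t^* v$ for a suitable $t^* > 0$.

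I would first establish that every positive eigenfunction is Lipschitz in $\overline{\Omega}$ and satisfies a Hopf-type lower bound $u(x) \ge c_u\, \dist(x, \partial\Omega)$ near $\partial\Omega$; since $\partial \Omega$ is smooth, this should follow by constructing explicit radial barriers for $\nplap + \overline{\lambda}_p(\Omega)\, I$, or by invoking regularity results from \cite{BiDe2006}. These two facts together ensure that
\[
   t^* := \inf_{\Omega} \frac{u}{v} \in (0, +\infty),
\]
and that $w := u - t^* v$ is nonnegative in $\Omega$, attains the value $0$ somewhere in $\overline{\Omega}$, and can be seen as the difference of two viscosity solutions of the same equation.

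The heart of the proof is to show that $w \equiv 0$ in $\Omega$. Suppose first the infimum defining $t^*$ is attained at an interior point $x_0 \in \Omega$. Provided $\nabla u(x_0) \neq 0$, the idea is to linearize $\nplap$ around the common gradient direction at $x_0$, producing a uniformly elliptic linear operator $L$ (with bounded coefficients depending on $\nabla u/|\nabla u|$) such that $w$ is a viscosity supersolution of $-Lw - \overline{\lambda}_p(\Omega)\, w \le 0$ with $w \geq 0$ and $w(x_0) = 0$; a strong maximum principle for viscosity solutions of uniformly elliptic equations then forces $w \equiv 0$ in the connected component of $\Omega$ containing $x_0$, hence in all of $\Omega$ by connectedness. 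Should instead the infimum be attained only on $\partial\Omega$, a Hopf-type lemma for $\nplap + \overline{\lambda}_p(\Omega)\, I$ would yield an inward normal derivative of $w$ of strict sign at that boundary point, contradicting $u/v \ge t^*$ throughout~$\Omega$.

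The main obstacle I anticipate is the handling of possible critical points of $u$ or $v$ at the touching location $x_0$: the operator $\nplap$ is not defined classically when the gradient vanishes and must be interpreted through the upper/lower semicontinuous envelopes of its symbol, which renders the linearization above delicate. One remedy is to combine the strong maximum principle with a careful perturbation/scaling argument ensuring that touching points can be chosen (or approached) at non-critical locations, using again the $1$-homogeneity of $\nplap$. A secondary technical point is proving the Hopf lemma for a fully nonlinear operator with a zeroth-order term of the "wrong" sign $+\overline\lambda_p(\Omega)\, u$, which should follow by building radial sub-/supersolutions in half-balls adapted to the smooth portion of $\partial\Omega$.
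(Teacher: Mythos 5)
Your overall skeleton (ratio $t^*=\inf_\Omega u/v$, Hopf lemma to make $t^*$ finite and positive, strong maximum principle at a touching point) is the same classical strategy the paper follows, but the one step you explicitly flag as ``the main obstacle'' --- what to do when the touching point is a critical point of $u$ or $v$ --- is precisely the crux, and your proposed remedy (``a careful perturbation/scaling argument ensuring that touching points can be chosen or approached at non-critical locations'') is not a proof and has no evident implementation: there is no reason a touching point of $u$ and $t^*v$ avoids the critical set, the $1$-homogeneity of $\nplap$ does not move it, and the set where $\nabla u=0$ could in principle disconnect the region where your linearized operator $L$ is uniformly elliptic, so the strong maximum principle would not propagate $w\equiv 0$ ``to all of $\Omega$ by connectedness'' as you assert. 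A related soft spot is that the difference of two viscosity solutions of a fully nonlinear equation is a supersolution of the linearized equation only where the solutions are classical; near a noncritical point this can be bootstrapped from $C^{1,\alpha}$ regularity, but it must be said, and it fails on the critical set.

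The paper resolves exactly this difficulty by never running the maximum principle through the interior. It works in a connected neighbourhood $\mathcal V$ of $\partial\Omega$ where the Hopf lemma (Lemma \ref{t:hopf}) forces $\partial u/\partial\nu<-\delta$ and $\partial v/\partial\nu<-\delta$, hence $\nabla u,\nabla v\neq 0$, hence smoothness and uniform ellipticity of the linearization $L_p$ there; the connectedness of $\partial\Omega$ (an explicit hypothesis you never use) is what makes $\mathcal V$ connected so that the strong maximum principle yields $u-av\equiv 0$ on all of $\mathcal V$, and then $ab=1$ closes the argument. To produce a zero of $u-av$ \emph{inside} $\mathcal V$ (rather than at an arbitrary, possibly critical, interior point $\overline x$), the paper invokes the Lu--Wang comparison principle for the degenerate operator on a subdomain $\omega$ with $\partial\omega\subset\mathcal V$; this comparison is applicable because $\overline\lambda_p(\Omega)u>0$ (Lemma \ref{l:positivity}), which is where the strict positivity of the eigenvalue enters. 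You would need to supply an analogue of this transport step, or some other device that confines the maximum-principle argument to the noncritical region, before your proof is complete.
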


Here and in the following, $\partial\Omega$ smooth means that
it is of class $C^{2,\alpha}$.  
Theorem \ref{t:simple} has the following  immediate consequence: 
 
  \begin{corollary}\label{symmetry}
 Let $\Omega$ be an open bounded domain in $\R ^n$, with $\partial \Omega$ smooth and connected. If $\Omega$ is invariant under elements from a symmetry group such as reflections or rotations, then so are the first eigenfunctions of the normalized $p$-Laplace operator.
 \end{corollary}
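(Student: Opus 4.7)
The plan is to deduce the corollary as a direct consequence of the simplicity Theorem \ref{t:simple}. The key observation is that the normalized $p$-Laplacian is invariant under isometries of $\R^n$: if $T : \R^n \to \R^n$ is an isometry (rotation or reflection), then since $\nabla(u \circ T) = T^\top (\nabla u) \circ T$ and $\nabla^2 (u \circ T) = T^\top (\nabla^2 u) \circ T\, T$, both $\langle \nabla^2 u \, \nabla u, \nabla u\rangle / |\nabla u|^2$ and $\Trace(\nabla^2 u)$ transform simply via composition with $T$. Hence $\nplap(u \circ T) = (\nplap u) \circ T$, and the same identity holds in the viscosity sense.

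Suppose now $\Omega$ is invariant under an isometry $T$ (so $T(\Omega) = \Omega$), and let $u$ be a positive first eigenfunction, i.e.\ a positive viscosity solution of \eqref{f:pb1} with $\lambda_p = \overline \lambda_p(\Omega)$. Then $v := u \circ T$ is also a positive viscosity solution of the same equation on $\Omega$, with zero boundary values on $\partial\Omega = T(\partial\Omega)$. Thus $v$ is another positive eigenfunction associated to $\overline \lambda_p(\Omega)$.

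By Theorem \ref{t:simple}, there exists $t > 0$ such that $u \circ T = t\, u$ in $\Omega$. To conclude it suffices to show $t = 1$. This follows immediately by looking at the maximum: since $T$ is a bijection of $\Omega$ onto itself,
\begin{equation*}
\max_{\overline\Omega} u \;=\; \max_{\overline\Omega} (u \circ T) \;=\; t \max_{\overline\Omega} u,
\end{equation*}
and since $u$ is positive and continuous (hence attains a strictly positive maximum on $\overline\Omega$), we must have $t = 1$. Therefore $u \circ T = u$, which is precisely the claimed invariance of $u$ under $T$. Applying this to each element of the symmetry group yields the corollary.

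The argument is essentially a routine symmetry/uniqueness bootstrap, so no serious obstacle is expected; the only point to be careful about is checking that composition with an isometry is compatible with the viscosity notion, which is standard since test functions transform covariantly.
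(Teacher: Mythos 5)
Your argument is correct and is exactly the one the paper has in mind: the corollary is stated there as an immediate consequence of Theorem \ref{t:simple}, via the isometry-invariance of $\nplap$ (which also holds at the discontinuity $\xi=0$, since the semicontinuous envelopes \eqref{F_p^N_*}--\eqref{F_p^N^*} depend only on the eigenvalues of $X$ and are thus preserved under conjugation by an orthogonal matrix), followed by the normalization $t=1$ obtained by comparing maxima. No gap; this matches the paper's intended (unwritten) proof.
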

 
In order to obtain Theorem \ref{t:simple} we follow the approach used by Sakaguchi in \cite{Sak}. 
In particular, it will be clear by inspection of the proof that this method does not work if one drops the assumption that 
$\partial \Omega$ is connected. It is conceivable that the result continues to be true for more general domains, as it is known in the literature for other kinds of operators at least in dimension two  (see for instance \cite[Theorem 4.1]{BirDem2010}). 
 
 As a fundamental preliminary tool, our proof of Theorem \ref{t:simple} exploits a Hopf type lemma (see Lemma \ref{t:hopf})  and, incidentally, it requires also the strict positivity of the eigenvalue. The latter can be easily established by comparison with the behaviour on balls (see Lemma \ref{l:ball} and Lemma \ref{l:positivity}).  
  In fact, the observation that $\overline\lambda_p(\Omega_1)\geq\overline\lambda_p(\Omega_2)$ for $\Omega_2\subset\Omega_1$ leads to the bounds 
 \begin{equation}\label{ballcomp}
\overline\lambda_p(B_R)\leq \overline\lambda_p(\Omega)\leq\overline\lambda_p(B_\rho),
\end{equation}
where $\rho$ and $R$ denote inradius and outer radius of $\Omega$, see the recent papers \cite{blanc, KH}.
These bounds are sharp if $\Omega$ is a ball, but they are far from optimal if $R-r$ becomes large, e.g. for slender ellipsoids. On the other hand, the problem of finding more accurate bounds for the eigenvalue seems to be an interesting and mostly unexplored question.   In this respect \eqref{ballcomp}  is complemented by the following lower estimate for $\overline \lambda _p (\Omega)$ in terms of the Lebesgue measure of $\Omega$.
 
\begin{theorem}\label{t:FK}
 For every open bounded  domain $\Omega$ in $\R ^n$   we have the lower bound 
$$\overline \lambda _p (\Omega) \geq K _{n, p} |\Omega| ^ { - 2/n} \, , $$
with 
\begin{equation}\label{f:Knp} 
K_{n,p}:= \frac{ \big ( n[(p-1) \wedge 1 ] ) ^ 2 } {p(p-1) }   \,
4^{-1+1/n}\, \pi^{1+1/n}\, \Gamma\left(\frac{n+1}{2}\right)^{-2/n}\,.
\end{equation}

\end{theorem}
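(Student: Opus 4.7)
The plan is to deduce the lower bound from an Alexandrov--Bakelman--Pucci (ABP) estimate applied to a positive eigenfunction. At any point where $\nabla u \neq 0$, one can write $\nplap u = \Trace(A(\nabla u)\nabla^2 u)$ in non-divergence form, where the symmetric matrix
\[
A(\xi) = \frac{1}{p}I + \frac{p-2}{p}\frac{\xi \otimes \xi}{|\xi|^2}
\]
has $n-1$ eigenvalues equal to $\frac{1}{p}$ and one eigenvalue equal to $\frac{p-1}{p}$. Setting $c := (p-1)\wedge 1$, its smallest eigenvalue is $\lambda_0 = c/p$, while $\det A(\xi) = (p-1)/p^n$; in particular $\nplap$ is uniformly elliptic.

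Let $u$ be a positive viscosity eigenfunction with $M := \max_\Omega u$, and let $\Gamma_u$ denote the upper concave envelope of the zero-extension of $u$. On the contact set $K := \{u = \Gamma_u\} \cap \Omega$, $\nabla^2 u$ exists a.e.\ and is negative semidefinite. The inequality $A \geq \lambda_0 I$ gives $\Trace(A\nabla^2 u) \leq \lambda_0\Delta u$ on $K$, which combined with $-\nplap u = \overline{\lambda}_p(\Omega)\,u$ yields $-\Delta u \leq p\,\overline{\lambda}_p(\Omega)\,u/c$ on $K$. The AM--GM inequality applied to the nonnegative eigenvalues of $-\nabla^2 u$ then produces the pointwise estimate
\[
\det(-\nabla^2 u)^{1/n} \leq \frac{-\Delta u}{n} \leq \frac{p\,\overline{\lambda}_p(\Omega)\,u}{nc}\qquad\text{on }K.
\]

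One next applies the volume-based version of the ABP inequality (\`a la Cabr\'e), which asserts $M \leq C(n,p)\,|\Omega|^{1/n}\,\bigl(\int_K \det(-\nabla^2 u)\,dx\bigr)^{1/n}$ for an explicit constant $C(n,p)$ depending on $n$ and on the ellipticity ratio of $\nplap$. Combining this with the pointwise estimate above and using $u \leq M$ on $K$ to bound the integral, the factor $M$ cancels and one reaches an inequality of the form $\overline{\lambda}_p(\Omega) \geq K_{n,p}\,|\Omega|^{-2/n}$.

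The main obstacle is tracking the explicit constants so as to match the form of $K_{n,p}$ stated in \eqref{f:Knp}: the geometric factor $4^{-1+1/n}\pi^{1+1/n}\Gamma((n+1)/2)^{-2/n}$ arises from the dimensional constant in Cabr\'e's geometric construction (it involves the Lebesgue measure $\pi^{(n-1)/2}/\Gamma((n+1)/2)$ of the unit ball in $\R^{n-1}$, which governs the image of the gradient map of $\Gamma_u$), while the ellipticity-dependent prefactor $(nc)^2/(p(p-1))$ must be obtained by exploiting not merely the smallest eigenvalue $\lambda_0$ of $A$, but also the precise determinant $\det A(\xi) = (p-1)/p^n$, in a refined version of the AM--GM step that contributes the extra $1/(p-1)$ factor beyond the naive estimate.
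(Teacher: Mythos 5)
Your setup (the non-divergence form $\nplap u=\Trace(A(\nabla u)\nabla^2u)$, the eigenvalues of $A$, the AM--GM step on the contact set) is sound and matches the spirit of the paper's ABP argument, but the proof has a genuine gap at its pivotal step: the ``volume-based ABP inequality'' $M \leq C(n,p)\,|\Omega|^{1/n}\bigl(\int_K \det(-\nabla^2 u)\,dx\bigr)^{1/n}$ that you invoke is not a valid statement for general bounded domains. The classical ABP estimate, obtained from the concave envelope of the zero-extension of $u$, controls $M$ by the \emph{diameter} $d$ of $\Omega$ (the gradient image of the envelope only contains the ball $B_{M/d}$), not by $|\Omega|^{1/n}$. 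Running your cancellation-of-$M$ argument with the correct (diameter) constant yields $\overline\lambda_p(\Omega)\gtrsim \bigl(d\,|\Omega|^{1/n}\bigr)^{-1}$, which for elongated domains is strictly weaker than $|\Omega|^{-2/n}$ and does not prove the theorem. Obtaining a genuinely volume-based estimate is exactly the point of Cabr\'e's method, and it requires an idea that is missing from your proposal: one must apply the gradient-map/area-formula argument not to $u$ but to $v=\log u$. Since $v=-\infty$ on $\partial\Omega$, for every $\xi\in\R^n$ the function $v(x)-\xi\cdot x$ attains an interior maximum, so $-\nabla v$ restricted to the upper contact set of $v$ covers \emph{all} of $\R^n$; meanwhile $v$ satisfies $-\nplap v=\overline\lambda_p+\tfrac{p-1}{p}|\nabla v|^2$, and the extra gradient term is absorbed by integrating the weight $g(\xi)=\bigl(\overline\lambda_p+\tfrac{p-1}{p}|\xi|^2\bigr)^{-n}$ over $\R^n$. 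This is also where the constant actually comes from: the factor $\tfrac{1}{p-1}$ and the square $\bigl(n[(p-1)\wedge1]/p\bigr)^2$ in $K_{n,p}$ arise because $I_g\sim\lambda_p^{-n/2}$, and the Gamma factor comes from the explicit radial integral $\int_0^\infty t^{n-1}(1+t^2)^{-n}\,dt$ --- not, as you suggest, from $\det A$ or from the measure of a ball in $\R^{n-1}$ governing the gradient image. Your proposed ``refined AM--GM exploiting $\det A$'' has no clear mechanism to produce these factors.

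A secondary but real issue: $u$ is only a viscosity solution, so the assertion that $\nabla^2u$ exists a.e.\ on the contact set and that the pointwise differential inequality holds there is not justified as stated. The paper handles this by regularizing through supremal convolutions $u^\e$ (semiconvex, hence twice differentiable a.e., and still subsolutions by the magic properties of sup-convolution) and by taking concave envelopes $\Gamma_\sigma(v^\e)$, which are locally $C^{1,1}$ and agree with $v^\e$ wherever $\det D^2\Gamma_\sigma(v^\e)\neq0$; the area formula is applied to these regularized objects and the estimate is recovered in the limit $\sigma\to0^+$, $\e\to0^+$. You would need an analogous approximation scheme to make the contact-set computation rigorous even after fixing the main gap.
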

 
The proof of Theorem \ref{t:FK}  will be obtained by the Alexandrov--Bakelman--Pucci method, as addressed by Cabr\'e in \cite{C15} (see also \cite{CDDM}). Unfortunately, it seems to be an intrinsic drawback of this  approach to provide  a non-optimal estimate. Actually it is natural to conjecture that, as in case of the well-known Faber-Krahn inequality for the $p$-Laplacian, the product  $\overline \lambda _p (\Omega) |\Omega| ^ { 2/n}$ should be minimal on balls. 
 In other words, the optimal lower bound expected for the product $\overline \lambda _p (\Omega) |\Omega| ^ {  2/n}$ is the constant 
 $K^*_{n,p}:= \overline \lambda _p (B)  |B| ^ {2/n} $. 
Notice that due to the scaling invariance $B$ can be an arbitrary ball here. 
To prove such an optimal bound seems to be a very interesting and delicate problem. The symmetrization technique usually employed to prove the Faber-Krahn inequality for the $p$-Laplacian does not work here because the normalized $p$-Laplacian operator does not have a variational nature.  
 
 To demonstrate that \eqref{f:Knp} is not optimal for balls let us sketch a quick comparison between the values of $K _{n,p}$ and $K ^* _{n,p}$. 
  Clearly, by Theorem \ref{t:FK}, the quotient $K^*_{n,p} / K _{n, p}$ is larger than or equal to $1$.  
 In order to evaluate the presumed accuracy of our estimate, one can evaluate how far it is from $1$. 
 As shown in Lemma \ref{l:ball} below, we have  
\begin{equation}\label{f:Knp2} 
K^*_{n,p} =  \frac{\pi(p-1)}{p} \Gamma \Big ( 1 + \frac{n}{2} \Big ) ^ { -2/n} \big ( \mu _1 ^ { (- \alpha ) }  \big ) ^ 2 \,,
\end{equation}
where $\mu_1^{(-\alpha)}$ denotes the first zero of the Bessel function 
$J _{-\alpha}$, with $\alpha = \tfrac{p-n}{2(p-1)}$.
The plots in Figure \ref{fig:comparison} left and right, obtained with Mathematica,  represent this ratio in two and three dimensions as a function of $p$.
Observe that both maps
 $$p \mapsto  g_2 (p):= \frac{K^*_{2,p}}{K _{2, p}} \, , \qquad p \mapsto g_3 (p):= \frac{K^*_{3,p}}{K _{3, p}} \,$$ 
 turn out to be minimal at $p = 2$, with 
 $$g_2 (2) \approx 1.446\, , \qquad g_3 (2)\approx 1.561\,.$$
This shows that the constant $K_{n,p}$  in Theorem \ref{t:FK} is not optimal, not even in the linear case $p=2$.
\begin{figure}[ht]
\begin{minipage}{0.5\linewidth}
\centering
\includegraphics[height=4cm]{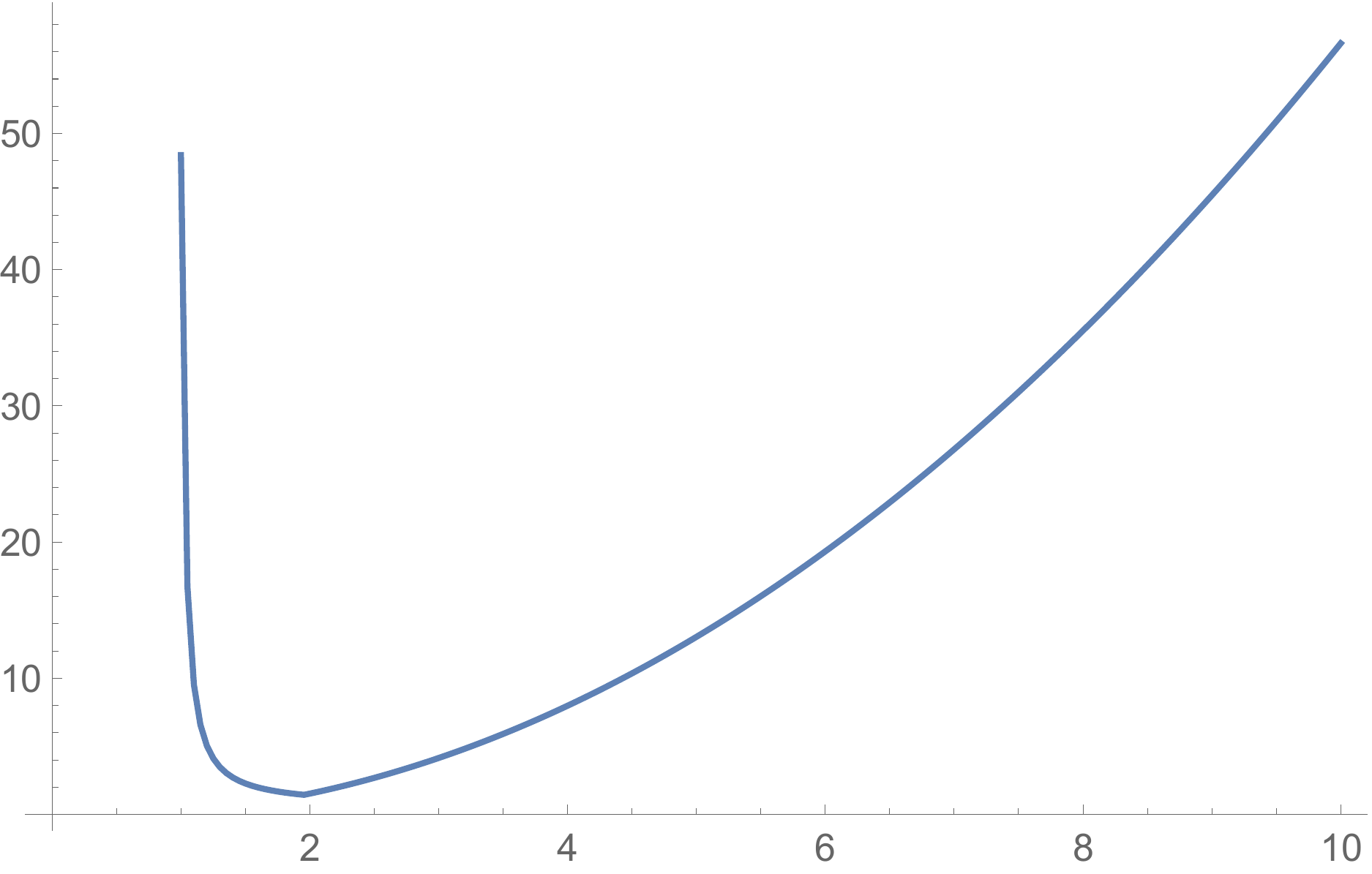}
\end{minipage}%
\begin{minipage}{0.5\linewidth}
\centering
\includegraphics[height=4cm]{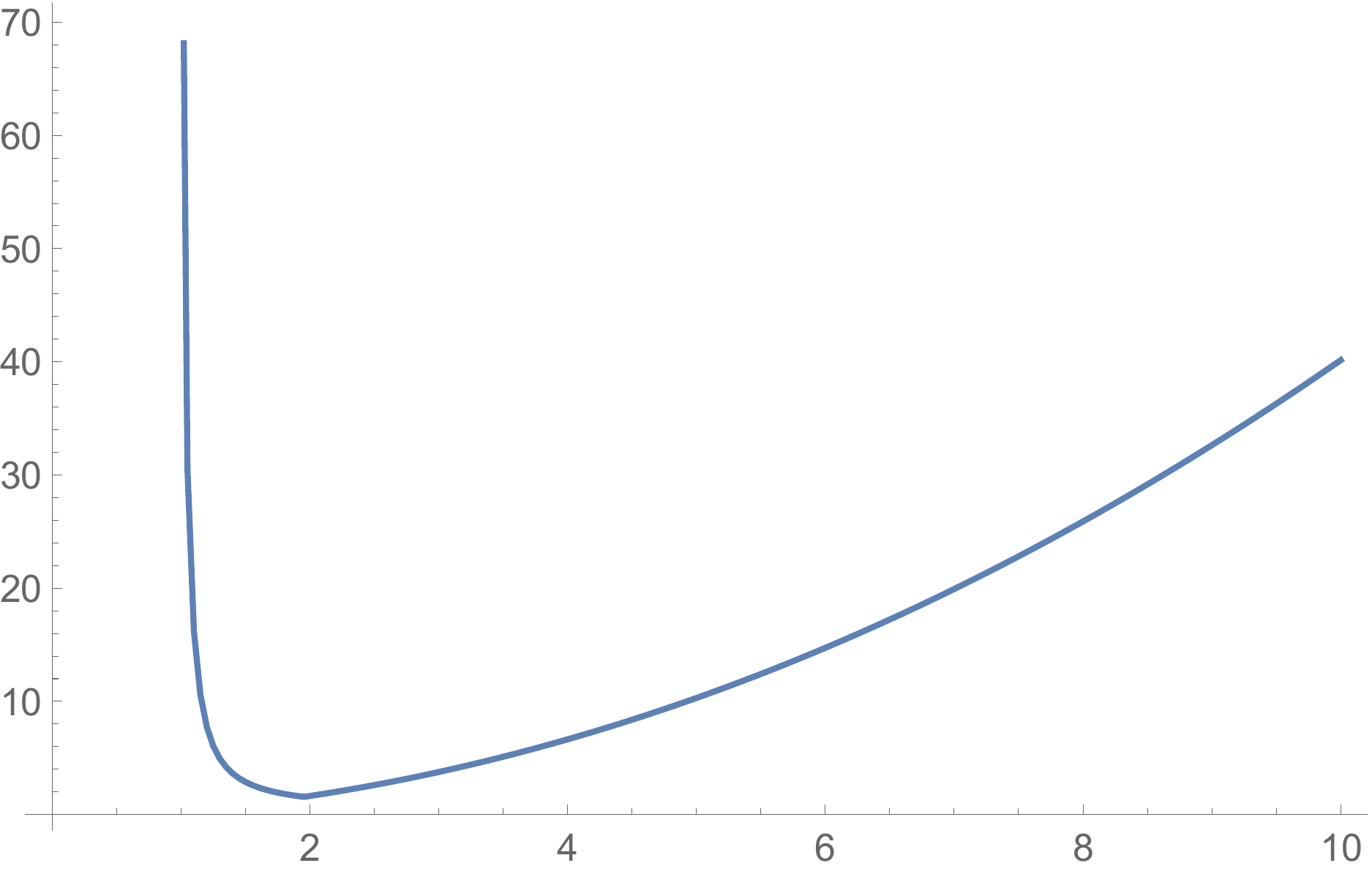}
\end{minipage}
\caption{Plots of $g_2(p)$ and $g_3(p)$}   
\label{fig:comparison}   
\end{figure}

The proofs of Theorems \ref{t:simple} and \ref{t:FK} are given in Section \ref{sec:proofs} below, 
after recalling the definition of viscosity solution to problem \eqref{f:pb1} and providing some preliminary results.

\section{Proofs}\label{sec:proofs}

In the notation of viscosity theory, the equation $- \nplap u = \lambda _p u $  can be rewritten as
\begin{equation}\label{F_p}
F_p ^N(\nabla u, \nabla ^ 2 u ) = \lambda_p u\, , 
\end{equation}
 where $F_p^N$  is defined on $(\R^n\setminus \{ 0 \})\times S(n)$ and $S(n)$ denotes the space of $n\times n$ symmetric matrices, with
\begin{equation}\label{f:Fp}
F_p^N(\xi, X) := - \frac{p-2}{p} |\xi|^{-2} \pscal{X\xi}{\xi} - \frac{1}{p}\, \Trace(X)\quad \forall \xi \in \R ^n \setminus \{ 0 \}\,,\ X \in S(n). 
\end{equation}
At $\xi=0$ the function $F_p^N$ is discontinuous.  In this case, following \cite{CIL} we request from a viscosity solution of \eqref{F_p}
that it is a viscosity subsolution of $(F^{N}_{p})_*(Du,D^2u)=\lambda_pu$ and a viscosity supersolution of $(F^{N}_p)^*(Du,D^2u)=\lambda_pu$.
 Here $(F^{N}_p)^*$ is the upper semicontinuous hull and $(F^N_{p})_*$ is the lower semicontinuous hull of $F^N_p$.

Now since $F_p^N$ is given by
\begin{equation*}
F_p^N(\xi,X)=
-\frac{1}{p}\left(\delta_{ij}+(p-2)\frac{\xi_i\xi_j}{|\xi|^2}\right)X_{ij}
 \hbox{ for }\xi\not=0 
\end{equation*}
we have to compute its semicontinuous limits as $\xi\to 0$. Each
symmetric matrix $X$ has real eigenvalues, and we order them according to magnitude as $\lambda_1(X)\leq\lambda_2(X)\leq\cdots\leq\lambda_n(X)$. 
Then a simple calculation shows that
\begin{equation}\label{F_p^N_*}
{(F_{p}^N)_*}(0,X)=\begin{cases}
\,-\, \frac{1}{p}\sum_{i=1}^{n-1}\lambda_i-\frac{p-1}{p}\lambda_n & \hbox{ if } p\in[2,\infty],\\
-\frac{1}{p}\sum_{i=2}^{n}\lambda_i-\frac{p-1}{p}\lambda_1 &\hbox{ if } p\in[1,2],\\
\end{cases}
\end{equation}
and
\begin{equation}\label{F_p^N^*}
(F_p^N)^*(0,X)=\begin{cases}
-\frac{1}{p}\sum_{i=2}^{n}\lambda_i\,-\frac{p-1}{p}\lambda_1 &\hbox{ if } p\in[2,\infty],\\
-\frac{1}{p}\sum_{i=1}^{n-1}\lambda_i-\frac{p-1}{p}\lambda_n & \hbox{ if } p\in[1,2].
\end{cases}
\end{equation}
In \cite{Bru} these bounds for the normalized $p$-Laplacian are called dominative and submissive $p$-Laplacians and studied in more detail. Anyway, the above considerations serve as a motivation for the following

\begin{definition}\label{def:vs}
Given a symmetric matrix $A\in S(n)$, we denote by $M (A)$ and $m (A)$ its greatest and smallest eigenvalue. 

\smallskip
-- An upper semicontinuous function $u :\Omega \to \R$ is  a viscosity subsolution of $- \nplap u = \lambda_p u$
in $\Omega$ if, for every point $x$ in $\Omega$ and every smooth function $\varphi$  which touches $u$ from above at $x$ (and for which $u - \varphi$ attains a local maximum at $x$) it holds
$$
\begin{cases}
- \nplap \varphi (x) \leq \lambda_p \varphi  (x)  & \text{ if } \nabla \varphi (x) \neq 0 
\\  \medskip 
- \frac{1}{p} \Delta \varphi (x) - \frac{(p-2) }{p} M (D ^ 2 \varphi (x) ) \leq \lambda_p \varphi (x) & \text { if  } \nabla \varphi (x) = 0 \text{ and } p \geq 2 
\\  \medskip 
- \frac{1}{p} \Delta \varphi (x) - \frac{(p-2) }{p} m (D ^ 2 \varphi (x) ) \leq \lambda_p \varphi (x) & \text { if  } \nabla \varphi (x) = 0 \text{ and } p \leq 2 . 
\end{cases}
$$ 
-- A lower semicontinuous function $u :\Omega \to \R$ is  a viscosity supersolution of $- \nplap u = \lambda_p u$
in $\Omega$ if, for every point $x$ in $\Omega$ and every smooth function $\varphi$  which touches $u$ from below at $x$ (and for which $u - \varphi$ attains a local minimum at $x$) it holds
$$
\begin{cases}
- \nplap \varphi (x) \geq \lambda_p \varphi  (x)  & \text{ if } \nabla \varphi (x) \neq 0 
\\  \medskip 
- \frac{1}{p} \Delta \varphi (x) - \frac{(p-2) }{p} m (D ^ 2 \varphi (x) ) \geq \lambda_p \varphi (x) & \text { if  } \nabla \varphi (x) = 0 \text{ and } p \geq 2 
\\  \medskip 
- \frac{1}{p} \Delta \varphi (x) - \frac{(p-2) }{p} M (D ^ 2 \varphi (x) ) \geq \lambda_p \varphi (x) & \text { if  } \nabla \varphi (x) = 0 \text{ and } p \leq 2 . 
\end{cases}
$$ 
-- A continuous function $u :\Omega \to \R$ is a  viscosity supersolution to $- \nplap u = \lambda_p u$  if it is both a viscosity supersolution and a viscosity subsolution. 
\end{definition}

\begin{remark}\label{r:propF}
For later use we mention that the function $F_p^N$  satisfies the following identities: 
\begin{itemize}
\item[(i)] $-F_p^N (t \xi, \mu X) =  - \mu F_p^N ( \xi, X)\quad \forall t \in \R \setminus \{ 0 \}$,  $\xi \in \R ^n\setminus \{ 0 \} $, $\mu \in \R$, and $X\in S(n)$.
\item[(ii)]  $-F_p^N (\xi, X ) \leq 0$ for any $\xi \in \R ^n \setminus \{ 0 \}$ and $X\in S(n)$ with $X\leq 0$. 
This follows from  \eqref{f:Fp}, since the eigenvalues $\lambda_i(X)$ are assumed nonpositive.

\item[(iii)]  As a consequence of \eqref{f:Fp}, \eqref{F_p^N_*} and \eqref{F_p^N^*},
for every $\xi\in\R^n$ and $X\in S(n)$ we have that
\begin{gather*}
\frac{(p-1)\wedge p}{p}\,
\Trace(X)
\leq
-F_p^N(\xi,X)
\leq
\frac{(p-1)\vee p}{p}\,
\Trace(X)\,,
\qquad
\text{if}\  X\geq 0,
\\
\frac{(p-1)\vee p}{p}\,
\Trace(X)
\leq
-F_p^N(\xi,X)
\leq
\frac{(p-1)\wedge p}{p}\,
\Trace(X)\,,
\qquad
\text{if}\ X\leq 0.
\end{gather*}
\end{itemize} 
\end{remark}

\bigskip 
 For $x \in \R ^n$ and $R>0$, we denote by $B _R (x)$ the open ball of radius $R$ centred at $x$. We also set for brevity $B _ R := B _ R (0)$.

\begin{lemma}[First eigenvalue of the ball]\label{l:ball} 
For any $p \in (1, + \infty)$, we have
$$\overline \lambda _p ( B _R) = \frac{p-1}{p} \Big ( \frac{\mu_1^{(-\alpha)}} {R} \Big ) ^ 2   = K ^ *_ {n,p} |B _R| ^ {-2/n}\, ,  $$
where $\mu_1^{(-\alpha)}$ denotes the first zero of the Bessel function 
$J _{-\alpha}$, for $\alpha = \tfrac{p-n}{2(p-1)}$  (and the constant $K ^ * _{n,p} $ is defined in \eqref{f:Knp2}). 
 \end{lemma}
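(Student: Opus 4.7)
The plan is to exhibit a positive classical radial eigenfunction on $B_R$ with the claimed eigenvalue, and then invoke the characterisation of $\overline\lambda_p$ from \cite{BiDe2006} to identify it with the first eigenvalue.

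First I look for a radial candidate $u(x) = \phi(|x|)$ with $\phi(0)>0$, $\phi'(0)=0$, $\phi>0$ on $[0,R)$ and $\phi(R)=0$. At points where $\phi'\neq 0$ the Hessian of $u$ has the eigenvalue $\phi''(r)$ in the radial direction and $\phi'(r)/r$ with multiplicity $n-1$ in the tangential ones, so
$$
\nplap u(x) \;=\; \frac{p-1}{p}\,\phi''(r) + \frac{n-1}{p}\,\frac{\phi'(r)}{r}, \qquad r=|x|\in(0,R],
$$
and the eigenvalue equation $-\nplap u = \lambda u$ reduces to the linear ODE
$$
\phi''(r) + \frac{n-1}{p-1}\,\frac{\phi'(r)}{r} + \frac{p\,\lambda}{p-1}\,\phi(r) = 0.
$$

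Second, setting $k := \sqrt{p\lambda/(p-1)}$ and substituting $\phi(r) = r^\alpha \psi(kr)$ with $\alpha := (p-n)/[2(p-1)]$, a direct calculation brings the ODE into Bessel's equation $s^2\psi'' + s\psi' + (s^2 - \alpha^2)\psi = 0$. The solution $r^\alpha J_{\alpha}(kr)$ behaves like $r^{2\alpha}$ near $r=0$, hence is either singular (when $\alpha<0$) or vanishing (when $\alpha>0$) there; so one is forced to select the companion solution
$$
\phi(r) \;:=\; r^\alpha \, J_{-\alpha}(kr),
$$
whose series development involves only integer powers of $r^2$. Consequently $\phi\in C^\infty([0,\infty))$, with $\phi(0) = (k/2)^{-\alpha}/\Gamma(1-\alpha) > 0$ and $\phi'(0)=0$, and $u(x) = \phi(|x|)$ is a classical, hence viscosity, function.

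Third, the Dirichlet condition $\phi(R)=0$ is equivalent to $kR$ being a positive zero of $J_{-\alpha}$. Picking the smallest one, $kR = \mu_1^{(-\alpha)}$, forces $\phi>0$ on $[0,R)$ and pins down
$$
\lambda^\ast \;=\; \frac{p-1}{p}\left(\frac{\mu_1^{(-\alpha)}}{R}\right)^2.
$$
Now by \cite[Theorems 1.3 and 1.4]{BiDe2006} any eigenvalue admitting a positive Dirichlet eigenfunction must coincide with $\overline\lambda_p$, so $\overline\lambda_p(B_R) = \lambda^\ast$. The second equality $\lambda^\ast = K^\ast_{n,p}\,|B_R|^{-2/n}$ follows by direct substitution of $|B_R| = \pi^{n/2}\,R^n/\Gamma(1+n/2)$ into the definition \eqref{f:Knp2} of $K^\ast_{n,p}$. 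The only delicate step is the reduction to Bessel's equation and the careful choice of the regular solution across the whole range $p\in(1,\infty)$ (where $\alpha$ varies in $(-\infty,1/2)$ and may occasionally be a non-positive integer); once the right representation $\phi(r) = r^\alpha J_{-\alpha}(kr)$ is in hand, the remainder of the proof is a purely computational verification.
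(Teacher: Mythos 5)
Your construction of the radial eigenfunction is correct and is essentially the computation that the paper outsources to \cite{KKK}: the reduction of $-\nplap u=\lambda u$ to the ODE $\phi''+\tfrac{n-1}{p-1}\tfrac{\phi'}{r}+\tfrac{p\lambda}{p-1}\phi=0$, the Bessel substitution of index $-\alpha$, and the selection of the solution that is regular and positive at the origin all check out; so does the final identity $\lambda^*=K^*_{n,p}|B_R|^{-2/n}$. (A small point you gloss over: ``classical, hence viscosity'' is not automatic at the origin, where $\nabla u=0$ and Definition~\ref{def:vs} imposes conditions through the semicontinuous envelopes; for a radial $C^2$ function with $D^2u(0)=\phi''(0)I$ this is a routine verification, but it should be said.)

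The genuine gap is in the identification $\overline\lambda_p(B_R)=\lambda^*$. One direction is indeed immediate: the eigenfunction is a positive $u$ with $\nplap u+\lambda^*u=0\le 0$, so the definition of $\overline\lambda_p$ as a supremum gives $\lambda^*\le\overline\lambda_p(B_R)$. The reverse inequality $\overline\lambda_p(B_R)\le\lambda^*$ is the substantive half of the lemma, and you dispose of it by attributing to \cite[Theorems 1.3 and 1.4]{BiDe2006} the assertion that any eigenvalue admitting a positive Dirichlet eigenfunction must coincide with $\overline\lambda_p$. Those theorems do not say this: they give the existence of a positive eigenfunction \emph{for} $\overline\lambda_p(\Omega)$, not the uniqueness of the eigenvalue carrying a positive eigenfunction. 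The statement you want is true, but establishing it is precisely the content of the second half of the paper's proof: arguing by contradiction, one assumes $\overline\lambda_p(B_R)>\lambda_p(R)$, picks $\rho<R$ with $\overline\lambda_p(B_R)>\lambda_p(\rho)>\lambda_p(R)$, extends the first radial eigenfunction of $B_\rho$ by zero to get a nonnegative, not identically zero subsolution $w$ of $-\nplap w\le\lambda_p(\rho)w$ in $B_R$ with $w\le0$ on $\partial B_R$, and then applies the comparison result of \cite[Theorem 1.1]{BiDe2006} (legitimate exactly because $\lambda_p(\rho)<\overline\lambda_p(B_R)$, and applicable to $\nplap$ by Remark~\ref{r:propF}(i)--(ii)) to conclude $w\le0$, a contradiction. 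You need to supply this maximum-principle step, or an equivalent one; the existence theorems you cite cannot carry that weight.
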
 

\begin{proof}
Set $\lambda_p (R):= \frac{p-1}{p} \Big ( \frac{\mu_1^{(-\alpha)}} {R} \Big ) ^ 2$. We first prove that $\overline \lambda _p ( B _R) \geq \lambda_p(R)$.  By definition, this amounts to show that problem \eqref{f:pb1} admits a positive viscosity subsolution when $\lambda_p = \lambda_p (R)$. We search for a radial solution and make the ansatz $u (x) = g (|x|)$. In terms of the function $g = g (r)$, problem \eqref{f:pb1} can be written as (see \cite{KKK})
\begin{equation}\label{f:pbradial} 
\begin{cases}
\displaystyle -  g''(r) - \Big ( \frac{n-1}{p-1} \Big )  \frac{g'(r)}{r} =\Big (  \frac{p}{p-1}  \Big ) \lambda _p \, g (r) & \text{ on } (0, R)
\\ \noalign{\medskip} 
g (R) = 0  & 
\\  \noalign{\medskip} 
g' (0) = 0 \,. & 
\end{cases}
\end{equation} 
For $p=2$ the left hand side in the differential equation is just the classical Laplacian, evaluated in polar coordinates for $g(|x|)$. For other $p$ it can be interpreted as a linear Laplacian in a fractional dimension. This was done in \cite{KKK}, and a full spectrum and orthonormal system of radial eigenfunctions was derived. The first eigenfunction is a (positive) multiple of $r^\alpha J_{-\alpha}(\mu_1^{(-\alpha )}\frac{r}{R})$. This function is positive in $B_R$.

Finally, let us show that the equality $\overline \lambda _p ( B _R)=  \lambda _p(R)$ holds. For this we use an idea from \cite{MPR2}, there given for $p>n$. Assume by contradiction that  
$\overline \lambda _p ( B _R)>  \lambda _p(R)$. Choose $\rho \in (0, R)$ such that 
$\overline \lambda _p ( B _R)> \lambda_p (\rho) > \lambda_p(R)$,  and
let $g _\rho$ be a positive solution to problem 
\begin{equation}\label{f:pbradial2} 
\begin{cases}
\displaystyle -  g''(r) - \Big ( \frac{n-1}{p-1} \Big )  \frac{g'(r)}{r} =\Big (  \frac{p}{p-1}  \Big ) \kappa _p (\rho) \, g (r) & \text{ on } (0, \rho)
\\ \noalign{\medskip} 
g (R) = 0  & 
\\  \noalign{\medskip} 
g' (0) = 0 \,. & 
\end{cases}
\end{equation} 
Then 
the function $w$ defined on $B _R$ by
$w (x) = g _\rho (|x| )$ if $|x| \leq \rho$ and $0$ otherwise turns out to satisfy
$- \nplap w \leq \lambda _p (\rho) w$ in $B _R$  and $w \leq 0$ on $\partial B _R$. 
In view of Remark \ref{r:propF} (i) and (ii), the  operator $\nplap$ 
 satisfies the assumptions of the 
comparison result stated in \cite[Theorem 1.1]{BiDe2006}.  
We infer that $w\leq 0$ in $B_R$, a contradiction.
\end{proof}

 \bigskip
 \begin{lemma}[Positivity of the eigenvalue] \label{l:positivity} For every open bounded domain  $\Omega \subset \R ^n$, we have $\overline \lambda _p (\Omega) >0$. 
 \end{lemma}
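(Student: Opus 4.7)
The strategy is simply to exploit the monotonicity of $\overline\lambda_p$ with respect to set inclusion (already quoted just above \eqref{ballcomp}) together with the explicit formula for the first eigenvalue of a ball given in Lemma \ref{l:ball}.

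First I would observe that, by definition, $\overline\lambda_p$ is monotone decreasing under domain inclusion. Indeed, if $\Omega\subset\Omega_1$ and $u$ is any positive function on $\Omega_1$ satisfying $\nplap u + \lambda_p u \leq 0$ in $\Omega_1$ in the viscosity sense, then the same $u$, restricted to $\Omega$, is positive on $\Omega$ and still satisfies the same viscosity inequality on $\Omega$ (the viscosity property is local). Thus every admissible $\lambda_p$ for $\Omega_1$ is admissible for $\Omega$, giving $\overline\lambda_p(\Omega)\geq \overline\lambda_p(\Omega_1)$.

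Since $\Omega$ is bounded, there exist $x_0\in\R^n$ and $R>0$ such that $\Omega\subset B_R(x_0)$. By the monotonicity just established, together with the obvious translation invariance of $\overline\lambda_p$, we obtain
\[
\overline\lambda_p(\Omega) \;\geq\; \overline\lambda_p(B_R(x_0)) \;=\; \overline\lambda_p(B_R).
\]
Lemma \ref{l:ball} evaluates the right-hand side explicitly as $\tfrac{p-1}{p}\bigl(\mu_1^{(-\alpha)}/R\bigr)^2$, which is strictly positive since $p>1$ and $\mu_1^{(-\alpha)}>0$ is the first zero of the Bessel function $J_{-\alpha}$. This yields the claim.

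There is no real obstacle here: the lemma is essentially a corollary of Lemma \ref{l:ball} once one observes the one-line monotonicity argument. The only point worth double-checking is that the restriction argument for the viscosity inequality is legitimate, which is immediate from the local nature of the definition (any test function touching $u$ from above at a point $x\in\Omega$ also touches $u$ from above at $x$ when viewed as a point of $\Omega_1$).
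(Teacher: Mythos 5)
Your proposal is correct and follows exactly the paper's own argument: monotonicity of $\overline\lambda_p$ under domain inclusion, comparison with a circumscribing ball, and the explicit positive value from Lemma \ref{l:ball}. The extra detail you supply on why the restriction argument is legitimate (locality of the viscosity inequality) is a welcome elaboration of what the paper dismisses as following ``readily from the definition.''
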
 

\begin{proof} 
From its definition, it readily follows that $\overline \lambda _p$ is monotone decreasing under domain inclusion, {\it i.e.} $\overline \lambda _p (\Omega_1 ) \geq \overline \lambda _p (\Omega_2 )$ if $\Omega _1 \subseteq \Omega _2$. In particular, for every open bounded domain $\Omega$, we have 
$\overline \lambda _p (\Omega) \geq \overline \lambda _ p  ( B _ R)$, where $R = R (\Omega) = \inf \big\{ r >0 \, :\, \Omega \subset B _ r (x)\text{ for some } x \big \}$.   Invoking Lemma \ref{l:ball}, we obtain the positivity of $\overline \lambda _p (\Omega)$. 
\end{proof}

\bigskip

In the following Lemma we do not assume differentiability of $u$ on the boundary. Nevertheless we can bound the difference quotient in interior normal direction from below.

\begin{lemma}[Hopf type Lemma]\label{t:hopf}
Assume that $\Omega\subset\R^n$ satisfies a uniform interior sphere
condition,
and let $u\in C(\overline{\Omega})$ be a positive viscosity supersolution
of $-\nplap u = 0$  in $\Omega$ such that $u=0$ on $\partial\Omega$.
Then there exists a constant $\kappa > 0$ such that for any $y\in \partial\Omega$
\begin{equation}\label{f:hopf}
\liminf_{t\to 0+} \frac{u(y-t\nu(y))}{t} \geq \kappa.
\end{equation}
Here $\nu$ denotes the unit outer normal to $\partial \Omega$, 
\end{lemma}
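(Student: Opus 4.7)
The natural approach is a barrier argument on a one-sided ball. By the uniform interior sphere condition, there exists $R_0 > 0$ such that, for every $y \in \partial\Omega$, one can pick $x_0 \in \Omega$ with $B_{R_0}(x_0) \subset \Omega$ and $y \in \partial B_{R_0}(x_0)$; in particular $\nu(y) = (y - x_0)/R_0$. On the annulus $A := B_{R_0}(x_0) \setminus \overline{B_{R_0/2}(x_0)}$ I would introduce the classical Hopf barrier
\[
w(x) := e^{-\alpha|x-x_0|^2} - e^{-\alpha R_0^2},
\]
which vanishes on $\partial B_{R_0}(x_0)$, is positive in $A$, and has $\nabla w(x) \neq 0$ for every $x \in A$.

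The first step is to show that, for a suitable choice of $\alpha$ depending only on $n$, $p$ and $R_0$, the function $w$ is a classical subsolution of $-\nplap w = 0$ on $A$. A direct computation starting from the pointwise expression of $\nplap$ in the regular case $\nabla w \neq 0$ yields
\[
-\nplap w(x) = \frac{2\alpha\, e^{-\alpha|x-x_0|^2}}{p}\bigl[\,(n + p - 2) - 2\alpha(p - 1)|x-x_0|^2\,\bigr],
\]
which is nonpositive as soon as $\alpha \geq \frac{2(n+p-2)}{(p-1)R_0^2}$, because $|x-x_0| \geq R_0/2$ on $A$. Since $w$ is smooth, this makes it a viscosity subsolution as well.

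The second step is comparison. On the compact set $K := \{x \in \overline{\Omega} : \dist(x,\partial\Omega) \geq R_0/2\}$ the continuous positive function $u$ attains a positive minimum $m > 0$ independent of $y$, and $\partial B_{R_0/2}(x_0) \subset K$. Choosing $c > 0$ so small that $c w \leq m \leq u$ on $\partial B_{R_0/2}(x_0)$, and noting that $c w = 0 \leq u$ on $\partial B_{R_0}(x_0)$, the comparison principle from \cite[Theorem 1.1]{BiDe2006}, applicable via the properties of $F_p^N$ recorded in Remark \ref{r:propF}, gives $c w \leq u$ throughout $A$. The Hopf estimate \eqref{f:hopf} then follows by evaluating at $y - t\nu(y)$ and Taylor-expanding $c w$ at $y$, using $\nabla w(y) = -2\alpha R_0\, e^{-\alpha R_0^2}\,\nu(y)$:
\[
u(y - t\nu(y)) \geq c w(y - t\nu(y)) = 2\alpha c R_0\, e^{-\alpha R_0^2}\, t + o(t),
\]
so the conclusion holds with $\kappa := 2\alpha c R_0\, e^{-\alpha R_0^2}$, which is independent of $y$.

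The main technical concern is not the algebra but verifying that the cited comparison principle genuinely applies to the smooth classical subsolution $cw$ against the viscosity supersolution $u$ on the annulus; since $\nabla w$ does not vanish on $A$, the singularity of $F_p^N$ at $\xi = 0$ plays no role and the hypotheses of \cite[Theorem 1.1]{BiDe2006} are satisfied. The uniformity of $\kappa$ across $y \in \partial\Omega$ is the only genuinely global ingredient and is secured by the uniform interior sphere condition together with the compactness of $K$.
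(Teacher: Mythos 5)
Your argument is the classical Hopf barrier construction, and it is a genuinely different route from the paper's proof, which consists of a single observation: the normalized $p$-Laplacian satisfies the structural hypotheses of the general Hopf-type boundary lemma of Bardi and Da Lio \cite[Theorem 1]{BDL}, so the conclusion is imported wholesale. Your approach has the merit of being self-contained and of producing an explicit $\kappa$; the paper's has the merit of brevity and of delegating the viscosity-theoretic subtleties to a reference designed for exactly this class of operators. Your computation of $-\nplap w$ on the annulus is correct (write $\nplap = \tfrac{p-2}{p}\Delta^N_\infty + \tfrac1p \Delta$ and use that $\nabla w$ is radial to get precisely the displayed expression), the threshold for $\alpha$ is right, and the uniformity of $\kappa$ over $y\in\partial\Omega$ is correctly reduced to the uniform radius $R_0$ and the compactness of $K$.

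The step you yourself flag as the main concern is indeed where the argument is thinnest, and as written it does not quite close. Theorem 1.1 of \cite{BiDe2006} is a comparison principle for $F(\nabla u,D^2u)\ge f$ versus $F(\nabla v,D^2v)\le g$ whose hypotheses impose a strict separation or a sign/monotonicity condition on the forcing terms; with both right-hand sides equal to $0$, as in your application, these hypotheses are not satisfied. Note that in the proof of Theorem \ref{t:simple} the authors themselves arrange for a strictly positive right-hand side ($f=\overline\lambda_p u>0$) precisely in order to invoke a comparison principle (\cite[Thm.~2.4]{LuWang2008}); comparison for $-\nplap$ with vanishing zeroth-order term is the borderline case. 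Fortunately no comparison theorem is needed here, because your barrier is a \emph{strict} classical subsolution in the open annulus: for $|x-x_0|>R_0/2$ one has $2\alpha(p-1)|x-x_0|^2> n+p-2$, hence $-\nplap w<0$ there, and $\nabla w$ never vanishes on $A$. If $\mu:=\max_{\overline A}(cw-u)>0$, the maximum is attained at an interior point $x^*$ (since $cw\le u$ on $\partial A$), and $cw-\mu$ is a smooth test function touching the viscosity supersolution $u$ from below at $x^*$ with nonzero gradient, which by Definition \ref{def:vs} forces $-\nplap (cw)(x^*)\ge 0$, a contradiction. With this two-line replacement of the citation, your proof is complete.
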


\begin{proof}
This follows from realizing that the normalized $p$-Laplacian satisfies the assumptions in \cite[Theorem 1]{BDL}. 
\end{proof}

\bigskip

{\bf Proof of Theorem \ref{t:simple}}. 
Let $u$ and $v$ be two positive eigenfunctions associated with $\overline \lambda _p (\Omega)$. 
Inspired by the appendix in \cite{Sak} we set
\[
\begin{array}{ll}
& a:= \sup \Big \{ t \in \R \ : \ u - tv >0 \text{ in } \Omega \Big \}
\\
\noalign{\medskip} 
& b:= \sup \Big \{ t \in \R \ : \ v - tu >0 \text{ in } \Omega \Big \}\,.
\end{array}
\] 
 Clearly, we have 
 \begin{equation}\label{f:pos} 
 u - a v \geq 0 \quad  \text{ and } \quad v-bu \geq 0 \qquad \text{ in } \Omega \,.
 \end{equation}
We claim that $a$ and $b$ are strictly positive.  
Indeed, the functions $u$ and $v$ are of class $C ^ {1, \alpha}$ up to the boundary (see \cite[Proposition 3.5]{BirDem2010} or  \cite[Theorem 1.1]{APR17}). Then, applying Lemma \ref{t:hopf} 
to $u$ and $v$, we see that
\begin{equation}\label{f:hopf2}
\frac{\partial u }{\partial \nu} <0 \quad \text{ and } \quad \frac{\partial v }{\partial \nu} <0 \qquad \text{ on } \partial \Omega\,.
\end{equation} 
Hence, for $t \in \R _+$ small enough,  $\frac{\partial}{\partial \nu} (u - t v)$ is strictly negative on $\partial \Omega$, so that  there exists $\overline t>0$   and a neighbourhood $\mathcal U$ of $\partial \Omega$ such that 
$u- \overline tv >0$ in $\mathcal U$.  It follows that
$$u- m  v >0 \text{  in } \Omega \quad \text{ for } 
m<\min \Big \{ \overline t, \frac{\min _{\Omega \setminus \overline { \mathcal U } } u } {\max_{\Omega \setminus \overline {\mathcal U } } v} \Big \}\,.$$  
Thus $a \geq m >0$. Arguing in the same way with $u$ and $v$ interchanged we obtain $b>0$, and our claim is proved. 

Now, to obtain the result, we are going to show that there exists a neighbourhood $\mathcal V$ of $\partial \Omega$ such that
\begin{equation}\label{f:neig}
u - a v = 0 \quad \text{ and } \quad v - b u = 0 \quad \text{ in } \mathcal V\,.
\end{equation}
This implies $u - (ab) u = 0$ in $\mathcal V$ and, in view of the condition $u >0 $ in $\Omega$, $b = a ^ {-1}$. 
 The latter equality, combined with \eqref{f:pos}, implies $u - av = 0$  in $\Omega$ as required. 

Let us show how to obtain the first equality in \eqref{f:neig}, the derivation of the second one is  completely analogous. 

By the regularity of $\partial \Omega$, its unit outer normal $\nu$ can be extended to a smooth unit vector field, still denoted by $\nu$, defined in an open connected neighbourhood of $\partial \Omega$. 
Then, by \eqref{f:hopf2} and the $C ^ 1$ regularity of $u$ and $v$ on $\overline \Omega$, we infer that there exist $\delta >0$ and an open connected neighbourhood $\mathcal V$ of $\partial \Omega$ such that 
\begin{equation}\label{f:nondeg}
\frac{\partial u }{\partial \nu} <-\delta \quad \text{ and } \quad \frac{\partial v }{\partial \nu} <- \delta \qquad \text{ in } \overline {\mathcal V}\,.
\end{equation}
This implies first of all that the PDE solved by $u$ and $v$ is nondegenerate in $\mathcal V$, which in turn, by standard elliptic regularity (see \cite{GT}) yields that $u$ and $v$ are of class $C ^ \infty$ in $\mathcal V$.  Moreover, from the inequality
$$0 \leq \overline \lambda _p u - \overline \lambda _p (av) \qquad \text{ in } \Omega$$
we infer that
$$0 \leq - \nplap u -  \big ( - \nplap (av) \big )  = L_p (u - av)  \qquad \text{ in } \Omega\, , $$
 where $L_p w = \sum _{i,j= 1}^n c_{ij} w _{x_i x_j} + \sum _{i = 1} ^ n d _i w _{x_i}$ is the linear operator defined by
 $$\begin{array}{ll}
 & \displaystyle  c _{ij}:= \int _0 ^ 1 \frac{\partial F_p^N }{\partial X _{ij} } \big ( s  \nabla u+ ( 1-s ) \nabla v,  s  \nabla^2 u+ ( 1-s ) \nabla^2  v  \big ) \, ds 
 \\
 \noalign{\bigskip} 
 & \displaystyle  d _{i}:= \int _0 ^ 1 \frac{\partial F_p^N }{\partial \xi _{i}} \big ( s  \nabla u+ ( 1-s ) \nabla v,  s  \nabla^2 u+ ( 1-s ) \nabla^2  v  \big ) \, ds \,.
 \end{array}
 $$ 
 In particular, since 
 $$ \frac{\partial F_p^N }{\partial X _{ij} } = - \frac{p-2}{p}\frac{1}{ |\xi| ^ 2} \xi _i \xi _j - \frac{1}{p} 
  \delta _{ij}\,$$ 
  and, from \eqref{f:nondeg}, 
  $$\forall s \in [0,1], \quad s  \frac{\partial u}{\partial \nu} +  (1-s) \frac{\partial(a v) }{\partial \nu}  \leq  - \min \{ \delta, a \delta \} <0 \quad \text{ in } \mathcal V\, ,$$
  we see that $L_p$ is {\it uniformly elliptic} in 
the connected set $\mathcal V$. 
 Then, to achieve our proof, it is enough to show that there exists some point $x^* \in \mathcal V$ where the function $
 u-av$ vanishes. Indeed, if this is the case, we have:
 $$
 \begin{cases}
L_p (u-av) \geq 0   & \text{ in } \mathcal V
\\ 
\noalign{\medskip}
u - av \geq 0 & \text{ in } \mathcal V
\\
\noalign{\medskip}
(u-av) (x^*) = 0\,. 
\end{cases} 
 $$ 
 By the strong maximum principle for uniformly elliptic operators  \cite[Theorem 3.5]{GT}, it will follow that
 $u-av \equiv 0$ in $\mathcal V$ as required. 
 We point out that, without the connectedness of $\partial \Omega$ (and hence of $\mathcal V$), the two equalities in \eqref{f:neigh} might be obtained in two, a priori distinct, connected components of $\mathcal V$, and this would not be sufficient to infer that $u$ and $v$ are proportional.

 To conclude, let us now show that $u-av$ vanishes at some point $x^*$ in $\mathcal V$. As an intermediate step we notice that the function $u-av$ must vanish at some point $\overline x$ in $\Omega$. Otherwise, we would have:  
 $$
 \begin{cases}
L_p (u-av) \geq 0   & \text{ in } \mathcal V
\\ 
\noalign{\medskip}
u - av > 0 & \text{ in } \mathcal V
\\
\noalign{\medskip}
u-av \equiv  0 & \text{ on } \partial \Omega\,. 
\end{cases} 
 $$ 
By applying Hopf's boundary point lemma for uniformly elliptic operators \cite[Lemma 3.4]{GT}, we infer that $\frac{\partial}{\partial \nu} (u - av) < 0$ on $\partial \Omega$. By continuity, this inequality, combined with the strict one $u-av >0 $ in $\Omega$ that we are assuming by contradiction, implies that $u - (a + \eta) v >0$  in $\Omega$ for some $\eta>0$. But this contradicts the definition of $a$.

Now, we choose an open bounded set $\omega$ with smooth boundary such that
\[
\overline \omega \subset\Omega\, , \quad \overline x \in \omega\, , \quad \partial \omega \subset \mathcal V\,.
\] 
We assert that there is a point $x ^* \in \partial \omega $ where $u-av$ vanishes (and this point does the job since 
$\partial \omega \subset \mathcal V$).  Assume the contrary. Then by continuity we have 
$u - av \geq \varepsilon >0$ on $\partial \omega$ for some 
$\varepsilon >0$.  Then the two functions $u$ and $w:= av +\varepsilon$  satisfy
\[
\begin{cases}
- \nplap u  = \overline \lambda _p u  \geq \overline \lambda _p (av)= - \nplap w & \text{ in } \omega
\\ 
u \geq w & \text{ on } \partial \omega\,.
\end{cases}
\]    
 In view of Lemma \ref{l:positivity}, the continuous function $f:=\overline \lambda _p u$ is strictly positive in $\omega$.  Now we can apply the comparison principle proved in \cite[Thm.~2.4]{LuWang2008},
and we infer that
\[
u \geq w \quad \text{ in } \omega\,.
\] 
In particular, since $\omega$ contains the point $\overline x$, we have 
$$u (\overline x ) \geq w (\overline x) = av (\overline x) +\varepsilon\,,$$ 
which gives a contradiction since $u (\overline x ) = av (\overline x)$.
\qed

\bigskip

 \bigskip
 In order to prove Theorem \ref{t:FK}, we need some preliminary results. 
 
 Let $u$ be  a positive eigenfunction associated with $\overline \lambda _p (\Omega)$. 
The approximations of 
$u$ via supremal convolution are defined for $\e >0$ by
\begin{equation}\label{f:ue}
u ^ \e (x) := \sup _{y \in \Omega} \Big \{ u (y)  - \frac { |x-y| ^ 2 }{2 \e}  \Big \} \qquad \forall x \in \Omega\,.
\end{equation}

Let us start with a preliminary lemma in which we recall
some basic well-known properties of the functions $u ^ \e$. 
To fix our setting let us define
\[
\rho(\e) := 2 \sqrt{\e \, \|u\|_\infty},
\qquad
 \Omega^{\rho(\e)} := \{x\in\Omega:\ d_{\partial\Omega}(x) > \rho(\e)\}\,,
\]
then for every \(x \in \Omega^{\rho(\e)}\) the supremum in \eqref{f:ue}
is attained at a point \(y_\e(x) \in \overline{B}_{\rho(\e)}(x)\subset\Omega\).
Thus, setting
\begin{equation}\label{f:Ue} 
U _\e:= \big \{ x \in \Omega \ :\ u (x) > \e \big \} \, , \qquad 
A_\e := \big \{ x \in U _\e \ :\ d_{\partial U _\e}(x) > \rho(\e) \big \}\, ,
\end{equation}
so that by definition
\begin{equation}\label{d:ue2}
u ^ \e (x) = 
u(y_\e(x)) - \frac{|x-y_\e(x)|^2}{2\e} =
\sup _{y \in U _\e} \Big \{ u (y)  - \frac { |x-y| ^ 2 }{2 \e}  \Big \} 
\qquad \forall x \in \overline A_\e\,. 
\end{equation}
In what follows, we shall always assume that $\e \in (0, 1)$ is small enough
to have $A_\e \neq \emptyset$.
Moreover, let us define
\begin{equation}\label{f:omegae}
m_\e := \max_{\partial A_\e} u^\e,
\qquad
\Omega_\e := \{x\in A_\e : \ u^\e (x) > m_\e \}\,.
\end{equation}

\begin{lemma}\label{l:approx1} 
Let
$u$ be a positive eigenfunction associated with $\overline \lambda _p (\Omega)$, let 
   $u ^\e$ be its supremal convolutions according to \eqref{f:ue}, and let  $\Omega_\e$ be the domains defined in \eqref{f:omegae}. Then:  
\begin{itemize}
\item[(i)] $u ^ \e$ is semiconvex in $\Omega _\e$;

\smallskip
\item[(ii)] $u ^\e$ is a viscosity sub-solution to $-\nplap  u -\overline \lambda _p (\Omega) u = 0$ in $\Omega_\e$;

\smallskip
\item[(iii)] as $\e \to 0 ^+$,  $u ^ \e$ converge to $u$ uniformly in $\overline \Omega$. 
Hence $m_\e \to 0$ and $\Omega_\e$ converges to $\Omega$
in Hausdorff distance; 

\smallskip
\item[(iv)] 
as $\e \to 0 ^+$,  $\nabla u ^ \e \to \nabla u$ locally uniformly in $\Omega$.
\end{itemize}
\end{lemma}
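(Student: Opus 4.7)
The four claims are standard sup-convolution properties, specialised to the viscosity eigenvalue PDE for $\nplap$. I would establish (i), (iii), (iv) first and leave (ii) for last, since it is the most delicate point.

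For (i), the identity
\[
u^\e(x) + \frac{|x|^2}{2\e}
= \sup_{y\in U_\e}\Bigl(u(y) - \frac{|y|^2}{2\e} + \frac{\langle x,y\rangle}{\e}\Bigr)
\]
exhibits the left-hand side as a pointwise supremum of affine functions of $x$, hence convex on $A_\e$; thus $u^\e$ is $\tfrac{1}{\e}$-semiconvex on $\Omega_\e\subset A_\e$. For (iii), the bound $u^\e \geq u$ is immediate by choosing $y=x$, while $u^\e(x)\leq u(y_\e(x))\leq \sup_{|y-x|\leq \rho(\e)} u(y)$, so uniform continuity of $u$ on $\overline\Omega$ forces $u^\e\to u$ uniformly; since $u\equiv 0$ on $\partial\Omega$, this yields $m_\e\to 0$ and the Hausdorff convergence $\Omega_\e\to\Omega$. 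For (iv), by (i), $u^\e$ is differentiable at a.e.\ point of $\Omega_\e$; at such points the envelope theorem yields $\nabla u^\e(x) = (y_\e(x)-x)/\e$, while the first-order optimality of $y_\e(x)$, valid since $u\in C^{1,\alpha}(\overline\Omega)$, yields $\nabla u(y_\e(x)) = (y_\e(x)-x)/\e$. Hence $\nabla u^\e(x) = \nabla u(y_\e(x))$, and local uniform convergence $\nabla u^\e\to\nabla u$ follows from uniform continuity of $\nabla u$ on $\overline\Omega$ together with $y_\e(x)\to x$ uniformly on compacts.

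The main work lies in (ii). Given a $C^2$ test function $\varphi$ touching $u^\e$ from above at $x_0\in\Omega_\e$, set $z := x_0 - y_\e(x_0)$ and define
\[
\tilde\varphi(y) := \varphi(y+z) + \frac{|z|^2}{2\e}.
\]
One verifies that $\tilde\varphi$ touches $u$ from above at $y_\e(x_0)\in\Omega$: equality at $y_\e(x_0)$ is immediate from $\varphi(x_0)=u^\e(x_0)$ and the definition of $u^\e$, and the inequality $\tilde\varphi(y)\geq u(y)$ for $y$ near $y_\e(x_0)$ follows from the chain $u(y)\leq u^\e(y+z)+|z|^2/(2\e)\leq \varphi(y+z)+|z|^2/(2\e) = \tilde\varphi(y)$. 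Since the gradient and Hessian of $\tilde\varphi$ at $y_\e(x_0)$ coincide with those of $\varphi$ at $x_0$, and $\nplap$ is translation-invariant, the subsolution property of $u$ at $y_\e(x_0)$ transfers to $\varphi$ at $x_0$, giving $-\nplap\varphi(x_0)\leq \overline\lambda_p u(y_\e(x_0))$. The subtle point is that the right-hand side is $\overline\lambda_p u(y_\e(x_0))$ rather than $\overline\lambda_p u^\e(x_0)$: the gap $u(y_\e(x_0))-u^\e(x_0) = |z|^2/(2\e)$ is of order $\e$ by the first-order condition $|z| = \e\,|\nabla u(y_\e(x_0))|\leq \e\,\|\nabla u\|_\infty$, so the required subsolution inequality holds up to an error $O(\e)$ vanishing in the limit. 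The case $\nabla\varphi(x_0)=0$ is treated analogously via the extremal-eigenvalue forms from Definition \ref{def:vs}.
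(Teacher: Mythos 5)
Your treatments of (i), (iii) and (iv) are correct and simply make explicit what the paper obtains by citation (Cannarsa--Sinestrari for the semiconvexity and the uniform convergence, \cite{CF7} for the gradients): the identity $\nabla u^\e(x)=\nabla u(y_\e(x))$ at points of differentiability, combined with $|y_\e(x)-x|\le\rho(\e)\to 0$ and the continuity of $\nabla u$, is exactly the right mechanism for (iv). For (ii), your translated test function $\tilde\varphi$ is the test-function incarnation of the ``magic property'' of sup-convolutions that the paper invokes through second-order superjets, so the route is the same as the paper's.

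The issue is the point you flag at the end of (ii), and it is a genuine gap rather than a removable subtlety: what your argument yields is $-\nplap\varphi(x_0)\le\overline\lambda_p\,u(y_\e(x_0))=\overline\lambda_p\,u^\e(x_0)+\overline\lambda_p|z|^2/(2\e)$, and since $\overline\lambda_p>0$ and $u(y_\e(x_0))\ge u^\e(x_0)$, this is strictly \emph{weaker} than the asserted inequality $-\nplap\varphi(x_0)\le\overline\lambda_p u^\e(x_0)$. The zeroth-order term enters with the ``wrong'' monotonicity (the operator $-\nplap v-\overline\lambda_p v$ is not proper), so the sup-convolution does not inherit the subsolution property verbatim; one can check on $\Omega=(0,\pi)$, $p=2$, $u=\sin$, where $u^\e$ is smooth and $-(u^\e)''(x)=\sin(y_\e(x))/(1+\e\sin(y_\e(x)))$ while $u^\e(x)=\sin(y_\e(x))-\tfrac{\e}{2}\cos^2(y_\e(x))$, that for small $\e$ the inequality $-(u^\e)''\le u^\e$ fails at points of $\Omega_\e$ with $\sin(y_\e(x))<1/\sqrt3$. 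So (ii) as stated is not what your argument proves --- nor, for that matter, what the paper's superjet argument proves, since it silently replaces $u(y)$ by $u^\e(x)$ on the right-hand side at the same step. What both arguments actually deliver is that $u^\e$ is a viscosity subsolution of $-\nplap v=\overline\lambda_p v+c_\e$ with $c_\e:=\overline\lambda_p\,\e\,\|\nabla u\|_{\infty}^2/2\to 0$. You should state and prove that corrected version and propagate the harmless error $c_\e$ through Lemma \ref{l:approx2}(iii) and inequality \eqref{f3} in the proof of Theorem \ref{t:FK}, where it disappears in the limit $\e\to 0^+$; presenting the $O(\e)$ remark as if it closed the argument for (ii) as literally stated is not sufficient.
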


\begin{proof}
(i)  We have $u ^ \e = - ( -u ) _\e$, where $( -u ) _\e$ is the infimal convolution defined by
\[ 
( -u ) _\e (x) := \inf _{y \in U _\e} \Big \{-  u (y)  + \frac { |x-y| ^ 2 }{2 \e}  \Big \} \qquad \forall x \in \Omega _\e\,.
\]
{}From \cite[Proposition 2.1.5]{CaSi}, it readily follows that $(-u) _\e$ is semiconcave on $\Omega _\e$, and hence that 
$u ^ \e$ is semiconvex on $\Omega _\e$.  
 
 (ii) The notion of of viscosity subsolution according to Definition \ref{def:vs} can be reformulated by asking that, for every $x \in \Omega$ and every $(\xi, X)$ in the second order 
superjet $J ^ {2, +} _\Omega u (x)$   (classically defined as in \cite{CIL}), it holds 
$$
\begin{cases}
F_p^N(\xi, X) \leq \lambda_p u  (x)  & \text{ if } \xi \neq 0 
\\  \medskip 
- \frac{1}{p} \Trace(X)  - \frac{(p-2) }{p} M (X) \leq \lambda_p u (x) & \text { if  } \xi = 0 \text{ and } p \geq 2 
\\  \medskip 
- \frac{1}{p} \Trace(X)  - \frac{(p-2) }{p} m (X) \leq \lambda_p u (x) & \text { if  }\xi = 0 \text{ and } p \leq 2 . 
\end{cases}
$$ 
Then, in order to prove (ii), it is enough to show that, for every fixed point $x \in \Omega _\e$, any pair  $(p, X) \in  J  ^ {2, +} _{\Omega _\e} u ^ \e (x)$ belongs to $J ^ {2, +} _{\Omega} u (y)$ for some other point $y \in \Omega _\e$. In fact, the so-called magic properties of supremal convolution ({\it cf.} \cite[Lemma A.5]{CIL}) assert precisely that any 
$(p, X) \in  J  ^ {2, +} _{\Omega _\e} u ^ \e (x)$ belongs to
$J ^ {2, +} _{\Omega _\e} u (y)$, where $y$ is a point at which the supremum which defines
$u ^ \e (x)$ is attained.  Since $y \in U _\e \subset \Omega _\e$, it holds $J ^ {2, +} _{\Omega} u (y)= J  ^ {2, +} _{\Omega _\e} u ^ \e(x)$. 

(iii) For these convergence properties we refer to \cite[Thm.\ 3.5.8]{CaSi},
\cite[Lemma 4]{CFd}. 

(iv)
Since $u\in C^1(\Omega)$, this property follows
from \cite[Lemma 10]{CF7}.
\end{proof}

\bigskip\bigskip

\begin{lemma}\label{l:approx2} 
Let
$u$ be a positive eigenfunction associated with $\overline \lambda _p (\Omega)$,  let 
   $u ^\e$ be its supremal convolutions according to \eqref{f:ue}, and let $\Omega_\e$ be the domains defined   in \eqref{f:omegae}.   Let $v ^ \e$ be the continuous functions defined by 
\begin{equation}\label{f:ve}
v ^ \e (x) := \begin{cases}
\log ( u ^ \e ) & \text{ if } x \in \Omega _\e
\\ 
\log (m _ \e) & \text{ if } x \in \R ^n \setminus \Omega _\e
 \end{cases} 
 \end{equation}
and, for $\sigma >0$, let $\Gamma _\sigma (v ^ \e)$ be the concave envelope of $v ^ \e$ on the set
   \begin{equation}\label{f:neigh}
   (\Omega _\e ^* ) _\sigma:= \Big \{ x \in \R ^n \ :\ {\rm dist} (x, \Omega _\e ^ *) \leq \sigma \Big \}\,,\end{equation}
 $\Omega_\e^*$ being the convex envelope of $\Omega _\e$. 
  Then:  
\begin{itemize}
\item[(i)]  $\Gamma_\sigma ( v ^ \e)$ is locally $C ^ {1, 1}$ in $(\Omega _\e ^* ) _\sigma$; 
\smallskip
\item[(ii)]  at any $x \in (\Omega _\e ^*)_\sigma$ such that $\det   D ^ 2 (\Gamma _\sigma  ( v ^ \e)(x) \neq 0$,  it holds
$v^\e(x) = \Gamma_\sigma ( v ^ \e)(x)$; 
\smallskip
\item[(iii)]  $v ^ \e$ is a viscosity sub-solution to $- \nplap v = \overline \lambda _p (\Omega ) + \frac{p-1}{p} |\nabla v  | ^ 2$ in $\Omega _\e$. 
 \end{itemize}
\end{lemma}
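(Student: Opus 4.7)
The proof splits naturally into three parts of different flavour: (i) and (ii) are classical structural properties of concave envelopes that sit at the heart of the ABP method, while (iii) is a direct computation showing how the logarithm transforms the equation for $u^\e$ into the gradient-quadratic equation for $v^\e$.

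For (i), the plan is to exploit the $\sigma$-enlargement crucially. On the buffer $(\Omega_\e^*)_\sigma \setminus \Omega_\e^*$ the function $v^\e$ equals the constant $\log m_\e$, and it is bounded above by $\log\|u^\e\|_\infty$ on all of $(\Omega_\e^*)_\sigma$. Hence at every interior point one can touch $\Gamma_\sigma(v^\e)$ from above by an affine function whose graph stays above $v^\e$ on a full $\sigma$-neighbourhood of the contact point; this is the geometric configuration that yields a uniform paraboloid estimate from below, which together with the paraboloid estimate from above given by concavity produces local $C^{1,1}$ regularity, via the standard lemma recalled in \cite{C15}. For (ii) I would use the classical structural fact that on the open set $\{\Gamma_\sigma(v^\e) > v^\e\}$ the envelope is affine along at least one direction through each point: otherwise it could be perturbed strictly upwards there while remaining concave and above $v^\e$, contradicting its definition as the smallest concave majorant. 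Thus at every non-contact point $D^2 \Gamma_\sigma(v^\e)$ has a zero eigenvalue, and the contrapositive is exactly (ii).

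For (iii) the key is the pointwise identity
\[
-\nplap(e^{v}) \,=\, e^{v}\left(-\nplap v - \tfrac{p-1}{p}|\nabla v|^2\right),
\]
valid at every point where $v$ is smooth and $\nabla v \neq 0$. It follows from $\nabla e^v = e^v \nabla v$ and $D^2 e^v = e^v(D^2 v+\nabla v\otimes\nabla v)$, together with the simplification $\tfrac{1}{p}+\tfrac{p-2}{p}=\tfrac{p-1}{p}$ that collects the contribution of the rank-one term $\nabla v\otimes\nabla v$. To pass to the viscosity sense, given a test function $\varphi$ that touches $v^\e$ from above at $x_0\in\Omega_\e$, the function $\psi := e^\varphi$ touches $u^\e = e^{v^\e}$ from above at $x_0$, so by Lemma \ref{l:approx1}(ii) we have $-\nplap\psi(x_0)\leq\overline\lambda_p(\Omega)\, u^\e(x_0)$; dividing by $u^\e(x_0)=e^{\varphi(x_0)}>0$ yields the required inequality for $v^\e$ whenever $\nabla\varphi(x_0)\neq 0$.

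The main delicate point, and the one I expect to be the real obstacle, is the case $\nabla\varphi(x_0)=0$, where the viscosity inequality must be interpreted through the semicontinuous envelopes of $F_p^N$ as in Definition \ref{def:vs}, involving $M(D^2\varphi)$ or $m(D^2\varphi)$ according to the sign of $p-2$. Fortunately $D^2\psi(x_0)=e^{\varphi(x_0)}\bigl(D^2\varphi(x_0)+\nabla\varphi(x_0)\otimes\nabla\varphi(x_0)\bigr)=e^{\varphi(x_0)} D^2\varphi(x_0)$ when $\nabla\varphi(x_0)=0$, so the eigenvalues simply rescale by the positive factor $e^{\varphi(x_0)}$ and the quadratic gradient term $\tfrac{p-1}{p}|\nabla\varphi(x_0)|^2$ vanishes identically; the two subsolution inequalities then reduce to equivalent statements, but one must verify carefully that the choice of envelope (depending on whether $p\geq 2$ or $p\leq 2$) is preserved by the monotone transformation $\varphi \leftrightarrow e^\varphi$, which it is since both maps are increasing.
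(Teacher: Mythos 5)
Your treatment of part (iii) is correct and is essentially the paper's own argument: the paper proves (iii) by observing that if a smooth $\varphi$ touches $v^\e$ from above at $x$ then $e^\varphi$ touches $u^\e$ from above at $x$, and then invokes the viscosity subsolution property of $u^\e$ (Lemma \ref{l:approx1}(ii)); your explicit identity for $-\nplap (e^{v})$ and your discussion of the degenerate case $\nabla\varphi(x_0)=0$ (where the eigenvalues of $D^2(e^\varphi)(x_0)$ are those of $D^2\varphi(x_0)$ rescaled by $e^{\varphi(x_0)}>0$, compatibly with the homogeneity recorded in Remark \ref{r:propF}(i)) fill in the details correctly. Part (ii) is also the standard fact the paper uses via \cite[Lemma 5]{CDDM}, though your one-line justification points the wrong way: an upward perturbation of the envelope cannot contradict its being the \emph{smallest} concave majorant; the usual argument is that a supporting affine function at a non-contact point must touch $\Gamma_\sigma(v^\e)$ on a set whose convex hull contains that point, which forces $\Gamma_\sigma(v^\e)$ to be affine along a segment through it and hence $\det D^2\Gamma_\sigma(v^\e)=0$ there.

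The genuine gap is in part (i). The configuration you describe --- a supporting affine function that dominates $v^\e$ on a full $\sigma$-ball around the touching point --- only yields a \emph{gradient} bound for the envelope (slopes controlled by $\mathrm{osc}(v^\e)/\sigma$); it does not produce any paraboloid estimate from below, and Lipschitz plus concave does not imply $C^{1,1}$. A one-dimensional counterexample to the mechanism as you state it: for $v=\max(-|x|,-1)$ on $[-1-\sigma,1+\sigma]$, which is bounded, Lipschitz and constant on the buffer, the concave envelope is $-|x|/(1+\sigma)$, which is not even differentiable at the contact point $x=0$. What rescues the lemma is the \emph{semiconvexity} of $v^\e$: at each contact point $v^\e$, and therefore $\Gamma_\sigma(v^\e)$, is touched from below by a paraboloid of uniform opening, and this is precisely the hypothesis of \cite[Lemma 5]{CDDM}, which the paper invokes for both (i) and (ii) after first noting that $v^\e$ is semiconvex as a consequence of the semiconvexity of $u^\e$ (Lemma \ref{l:approx1}(i)), the lower bound $u^\e> m_\e>0$ on $\Omega_\e$, and the composition rule \cite[Prop.~2.1.12]{CaSi} applied to $\log$. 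Your proof never establishes or uses this semiconvexity, so part (i) as written does not go through; once it is inserted, your outline coincides with the paper's.
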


\begin{proof}
We observe that by \cite[Prop.2.1.12]{CaSi} and Lemma \ref{l:approx1}(i) also $v^\e$ is semiconvex.
Statements (i) and (ii) follow now from \cite[Lemma 5]{CDDM} since, for every fixed $\e>0$, the function $v ^ \e - \log (m _ \e) $ satisfies the assumptions of such result on the convex domain $\Omega_\e^*$. 

Statement (iii) follows from part (iii) in Lemma \ref{l:approx1} above, combined with the fact that, if a smooth function $\varphi$ 
touches $v ^ \e$ from above at $x$, the smooth function $e ^ \varphi$ touches $u ^ \e$ from above at $x$. 
\end{proof}

\bigskip
{\bf Proof of Theorem \ref{t:FK}}. 
Throughout the proof we write for brevity $ \lambda _p$ in place of  $\overline \lambda _p (\Omega)$.
 Set 
\begin{equation}\label{f:defg} 
g (s): = \frac{1} {\Big ( \lambda _p + \frac{p-1}{p} s ^ 2  \Big ) ^n }\, , \qquad s \geq 0  \,,
\end{equation}
 and
 $$I _g := \int _{\R ^n} g (|\xi | ) \, d \xi \,.$$
 By direct computation in polar coordinates, the value of $I _g$ is given by
 \begin{equation}\label{f:Ig}
\begin{split} 
I _ g &  = \frac{\omega _n}{ \lambda _p ^n } \int _0 ^ { + \infty} 
  \frac{\rho ^ {n-1}} {\Big (1 + \frac{p-1}{p  \lambda _p} \rho ^ 2  \Big ) ^n } \, d \rho   =
  \frac{\omega _n}{ \lambda _p ^n } \Big (  \frac{p  \lambda _p} {p-1} \Big ) ^ {n/2} \int _0 ^ { + \infty} 
  \frac{t ^ {n-1}} {\big (1 + t^ 2  \big ) ^n } \, d t
\\
  &  = 2^{1-n} \, \pi^{(n+1)/2}   \Big (  \frac{p } {p-1} \Big ) ^ {n/2} 
\lambda _p ^{-n/2}\Gamma \Big ( \frac{n+1}{2} \Big )^{-1}\,,
\end{split} 
\end{equation}
where $\omega _n := \mathcal H ^ {n-1} (S ^ {n-1}) = 2 \pi^{n/2} \Gamma(n/2)^{-1}$.

 On the other hand, a natural idea in order to estimate $I _g$ (and hence $\lambda _p$) in terms of the measure of $\Omega$, is to apply the change of variables formula to the map $\xi = -\nabla v (x)$, with $v (x) = \log u (x)$ and
$u$ being a positive eigenfunction associated with $\lambda _p $. 

This is suggested by the fact that, as one can easily check, $v$ is a viscosity solution to 
\begin{equation}\label{f:eqv}\begin{cases}
-\nplap v = \lambda _p + \frac{p-1}{p} |\nabla v| ^ 2 & \text{ in } \Omega
\\  \noalign{\medskip} 
v = - \infty & \text{ on } \partial \Omega\,,
\end{cases}
 \end{equation}
combined with the observation that $-\nabla v$ maps $\Omega$ onto $\R ^n$, namely
 \begin{equation}\label{f:image}
 -\nabla v (\Omega) = \R ^n \,.
 \end{equation} 
Indeed, for every $p \in \R ^n$,   the minimum over $\overline \Omega$ of the function $ -v (y) - p \cdot y$  is necessarily attained a point $x$ lying in the interior  of $\Omega$ (since $v = - \infty$ on $\partial \Omega$), and at such point $x$ we have  $p =- \nabla v (x)$.  
 
 In view of \eqref{f:image}, we have 
 $$I _ g  = \int _{  -\nabla v (\Omega ) } g (|\xi | ) \, d \xi   \,,$$
 
but unfortunately the map $\xi = -\nabla v (x)$ 
  is a priori not regular enough to apply directly the area formula. Therefore, we need to proceed by approximation. 
   
Let $u ^\e$ be the supremal convolutions of $u$ according to \eqref{f:ue}, and let  $\Omega_\e$ be the domains defined in \eqref{f:omegae}. Then consider the functions $v ^ \e$ and the sets $(\Omega _\e  ^*)_\sigma$ defined  as in \eqref{f:ve} and \eqref{f:neigh}, and let $\Gamma _\sigma (v ^ \e)$ be the concave envelope of $v ^ \e$ on    $(\Omega _\e ^* ) _\sigma$.

By Lemma \ref{l:approx2} (i), we are in a position to 
apply the area formula on $(\Omega_{\e} ^* ) _\sigma $ (see \cite[Section 3.1.5]{GMS1}) to the map $\xi =-\nabla \Gamma _\sigma( v^\e)$, and we obtain 
\[
\begin{split}
\int _{ - \nabla \Gamma _\sigma( v^\e)  (  (\Omega _\e ^*)_\sigma)  } g (|\xi | ) \, d \xi 
& 
\leq \int _{  -\nabla \Gamma _\sigma( v^\e)  (  (\Omega _\e ^*)_\sigma) } g (|\xi | )\, {\rm card} ((-\nabla \Gamma _\sigma ( v ^\e) )^ {-1} (\xi) \cap (\Omega_{\e} ^* ) _\sigma )  \, d \xi  
\\
& 
= \int _{  (\Omega _\e ^*)_\sigma   } g (|\nabla \Gamma _\sigma(v ^\e) (x)| ) \, \det ( -  D ^ 2 \Gamma _\sigma( v^\e))  (x)  \, d x \,. 
\end{split}
\]
 Now, we introduce  the {\it contact set} 
 $$C_{\e, \sigma}:= \Big \{ x \in (\Omega _\e  ^*)_\sigma \ :\ v^\e(x) = \Gamma_\sigma ( v ^ \e)(x)\Big \} \,. $$  
 Thanks to Lemma \ref{l:approx2} (ii), we have
  $$ 
 \int _{  (\Omega _\e ^*)_\sigma   } g (|\nabla \Gamma _\sigma(v ^\e) (x)| ) \, \det(- D ^ 2 \Gamma _\sigma( v^\e)  )(x) \, d x  = 
 \int _{ C _{\e, \sigma}   } g (|\nabla v ^ \e(x)|)  \, \det (-D ^ 2  v ^ \e ) (x) \, d x  \,.   $$ 
Then we use the following pointwise estimates on $C_{\e, \sigma}$: 
\begin{gather} 
\det (-  D ^ 2 v^\e  ) \leq \Big (-\frac{1}{n} \Delta v ^ \e \Big ) ^ n   \label{f1}
\\
- \Delta v ^ \e  \leq - \frac {p}{(p-1) \wedge 1 } \nplap v ^ \e 
\label{f2} 
\\ 
- \nplap v ^\e \leq \lambda _p + \frac{p-1}{p} |\nabla v ^\e| ^ 2\,.   
\label{f3}
\end{gather} 
 Indeed, \eqref{f1} is consequence of the arithmetic-geometric inequality 
 observing that by construction $- D ^ 2 v_\e$ is non-negative definite on $C_{\e, \sigma}$,
 \eqref{f2} holds by  Remark \ref{r:propF} (iii), and
 \eqref{f3} holds  thanks to Lemma \ref{l:approx2}  (i) and (iii), at every point of $C _{\e, \sigma}$ where $v ^ \e$ is twice differentiable (hence a.e.\ on $C _{\e, \sigma}$). 
 
In this way we arrive at
 $$\begin{array}{ll} \displaystyle \int _{- \nabla \Gamma _\sigma( v^\e)  (  (\Omega _\e ^*)_\sigma) } g (|\xi | ) \, d \xi   & \displaystyle  \leq \int_{C _{\e, \sigma}  }  g (|\nabla v ^\e (x)|) \Big ( \frac {p}{n[(p-1) \wedge 1] } (- \nplap v  ^\e))  \Big )   ^n \, dx
 \\ \noalign{\medskip}  
  & \displaystyle\leq     \Big ( \frac {p}{n[(p-1) \wedge 1] }  \Big )  ^n  |C _{\e, \sigma} | \, , 
  \end{array}
  $$
where in the last inequality we have exploited the choice of the function $g$ in \eqref{f:defg}.   

So far, we have obtained the upper bound
$$ \int _{-\nabla \Gamma _\sigma( v^\e)  (  (\Omega _\e ^*)_\sigma)  } g (|\xi | ) \, d \xi  \leq  \Big ( \frac {p}{n[(p-1) \wedge 1] }  \Big )  ^n  |C _{\e, \sigma}| 
\,.$$ 

Now we pass to the limit in the above inequality, first as $\sigma \to 0 ^+$, and then as $\e \to 0 ^+$. 
In view of Lemma \ref{l:approx1} (iii) and \eqref{f:image}, we obtain
\[
\lim _{\e \to 0 ^+} \lim _{\sigma \to 0 ^+} \big ( -\nabla \Gamma _\sigma( v^\e)  (  (\Omega _\e ^*)_\sigma)  )  = \R ^n \qquad \text{ and } \qquad  \lim _{\e \to 0 ^+} \lim _{\sigma \to 0 ^+}  |C_{\e, \sigma}| \leq |\Omega | \,.
\]
We conclude that
\[
I _g \leq \Big ( \frac {p}{n[(p-1) \wedge 1] }  \Big )  ^n  |\Omega|  
\,.
\] 
The statement follows by inserting into the above inequality the explicit value of $I _g$ as given by \eqref{f:Ig}. 
\qed

\bigskip \bigskip

{\bf Acknowledgments.}
G.C.\ and I.F.\ 
have been supported by the Gruppo Nazionale per l'Analisi Matematica, 
la Probabilit\`a e le loro Applicazioni (GNAMPA) of the Istituto Nazionale di Alta Matematica (INdAM).

\def\cprime{$'$}
\begin{bibdiv}
\begin{biblist}

\bib{APR17}{article}{
  author={Attouchi, A.},
  author={Parviainen, M.},
  author={Ruosteenoja, E.},
  title={$C^{1,\alpha }$ regularity for the normalized $p$-Poisson problem},
  language={English, with English and French summaries},
  journal={J. Math. Pures Appl. (9)},
  volume={108},
  date={2017},
  number={4},
  pages={553--591},
  issn={0021-7824},
  review={\MR {3698169}},
  doi={10.1016/j.matpur.2017.05.003},
}

\bib{BK18}{article}{
  author={Banerjee, {A.}},
  author={Kawohl, {B.}},
  title={Overdetermined problems for the normalized {$p$}-{L}aplacian},
  date={2018},
  journal={Proc. Amer. Math. Soc. Ser. B},
  volume={5},
  pages={18\ndash 24},
}

\bib{BDL}{article}{
  author={Bardi, M.},
  author={Da Lio, F.},
  title={On the strong maximum principle for fully nonlinear degenerate elliptic equations},
  journal={Arch. Math. (Basel)},
  volume={73},
  date={1999},
  number={4},
  pages={276--285},
  issn={0003-889X},
  review={\MR {1710100}},
  doi={10.1007/s000130050399},
}

\bib{BNV}{article}{
  author={Berestycki, {H.}},
  author={Nirenberg, {L.}},
  author={Varadhan, {S. R. S.}},
  title={The principal eigenvalue and maximum principle for second-order elliptic operators in general domains},
  date={1994},
  journal={Comm. Pure Appl. Math.},
  volume={47},
  number={1},
  pages={47\ndash 92},
}

\bib{BiDe2006}{article}{
  author={Birindelli, {I.}},
  author={Demengel, {F.}},
  title={First eigenvalue and maximum principle for fully nonlinear singular operators},
  date={2006},
  journal={Adv. Differential Equations},
  volume={11},
  number={1},
  pages={91\ndash 119},
}

\bib{BirDem2010}{article}{
  author={Birindelli, {I.}},
  author={Demengel, {F.}},
  title={Regularity and uniqueness of the first eigenfunction for singular fully nonlinear operators},
  date={2010},
  issn={0022-0396},
  journal={J. Differential Equations},
  volume={249},
  number={5},
  pages={1089\ndash 1110},
  url={https://doi.org/10.1016/j.jde.2010.03.015},
  review={\MR {2652165}},
}

\bib{blanc}{misc}{
  author={Blanc, {P.}},
  title={A lower bound for the principal eigenvalue of fully nonlinear elliptic operators},
  date={2017},
  note={preprint arXiv:1709.02455},
}

\bib{Bru}{article}{
  author={Brustad, {K.\ K.}},
  title={Superposition in the {$p$}-{L}aplace equation},
  date={2017},
  issn={0362-546X},
  journal={Nonlinear Anal.},
  volume={158},
  pages={23\ndash 31},
  url={https://doi.org/10.1016/j.na.2017.04.004},
  review={\MR {3661428}},
}

\bib{C15}{article}{
  author={Cabr\'{e}, {X.}},
  title={Isoperimetric, {S}obolev, and eigenvalue inequalities via the {A}lexandroff-{B}akelman-{P}ucci method: a survey},
  date={2017},
  journal={Chin. Ann. Math. Ser. B},
  volume={38},
  number={1},
  pages={201\ndash 214},
}

\bib{CaSi}{book}{
  author={Cannarsa, {P.}},
  author={Sinestrari, {C.}},
  title={Semiconcave functions, {H}amilton-{J}acobi equations and optimal control},
  series={Progress in Nonlinear Differential Equations and their Applications},
  publisher={Birkh\"auser},
  address={Boston},
  date={2004},
  volume={58},
}

\bib{CDDM}{article}{
  author={Charro, {F.}},
  author={De~Philippis, {G.}},
  author={Di~Castro, {A.}},
  author={M\'{a}ximo, {D.}},
  title={On the {A}leksandrov-{B}akelman-{P}ucci estimate for the infinity {L}aplacian},
  date={2013},
  journal={Calc. Var. Partial Differential Equations},
  volume={48},
  number={3-4},
  pages={667\ndash 693},
}

\bib{CIL}{article}{
  author={Crandall, {M.G.}},
  author={Ishii, {H.}},
  author={Lions, {P.L.}},
  title={User's guide to viscosity solutions of second order partial differential equations},
  date={1992},
  journal={Bull. Amer. Math. Soc. (N.S.)},
  volume={27},
  pages={1\ndash 67},
}

\bib{CFd}{article}{
  author={Crasta, {G.}},
  author={Fragal{\`a}, {I.}},
  title={On the {D}irichlet and {S}errin problems for the inhomogeneous infinity {L}aplacian in convex domains: regularity and geometric results},
  date={2015},
  issn={0003-9527},
  journal={Arch. Ration. Mech. Anal.},
  volume={218},
  number={3},
  pages={1577\ndash 1607},
  url={http://dx.doi.org/10.1007/s00205-015-0888-4},
  review={\MR {3401015}},
}

\bib{CFe}{article}{
  author={Crasta, {G.}},
  author={Fragal{\`a}, {I.}},
  title={A {$C^1$} regularity result for the inhomogeneous normalized infinity {L}aplacian},
  date={2016},
  issn={0002-9939},
  journal={Proc. Amer. Math. Soc.},
  volume={144},
  number={6},
  pages={2547\ndash 2558},
  url={http://dx.doi.org/10.1090/proc/12916},
  review={\MR {3477071}},
}

\bib{CFf}{article}{
  author={Crasta, {G.}},
  author={Fragal{\`a}, {I.}},
  title={Characterization of stadium-like domains via boundary value problems for the infinity {L}aplacian},
  date={2016},
  issn={0362-546X},
  journal={Nonlinear Anal.},
  volume={133},
  pages={228\ndash 249},
  url={http://dx.doi.org/10.1016/j.na.2015.12.007},
  review={\MR {3449756}},
}

\bib{CF7}{misc}{
  author={Crasta, {G.}},
  author={Fragal{\`a}, {I.}},
  title={Rigidity results for variational infinity ground states},
  date={2017},
  note={To appear in Indiana Univ. Math. J.},
}

\bib{Does}{article}{
  author={Does, K.},
  title={An evolution equation involving the normalized {$p$}-{L}aplacian},
  date={2011},
  issn={1534-0392},
  journal={Commun. Pure Appl. Anal.},
  volume={10},
  number={1},
  pages={361\ndash 396},
  url={https://doi.org/10.3934/cpaa.2011.10.361},
  review={\MR {2746543}},
}

\bib{EKNT}{article}{
  author={Esposito, L.},
  author={Kawohl, B.},
  author={Nitsch, C.},
  author={Trombetti, C.},
  title={The {N}eumann eigenvalue problem for the {$\infty $}-{L}aplacian},
  date={2015},
  issn={1120-6330},
  journal={Atti Accad. Naz. Lincei Rend. Lincei Mat. Appl.},
  volume={26},
  number={2},
  pages={119\ndash 134},
  url={https://doi.org/10.4171/RLM/697},
  review={\MR {3341101}},
}

\bib{GMS1}{book}{
  author={Giaquinta, {M.}},
  author={Modica, {G.}},
  author={Sou{\v {c}}ek, {J.}},
  title={Cartesian currents in the calculus of variations. {I}},
  series={Ergebnisse der Mathematik und ihrer Grenzgebiete. 3. Folge. A Series of Modern Surveys in Mathematics [Results in Mathematics and Related Areas. 3rd Series. A Series of Modern Surveys in Mathematics]},
  publisher={Springer-Verlag},
  address={Berlin},
  date={1998},
  volume={37},
  isbn={3-540-64009-6},
  note={Cartesian currents},
  review={\MR {1645086 (2000b:49001a)}},
}

\bib{GT}{book}{
  author={Gilbarg, {D.}},
  author={Trudinger, {N.S.}},
  title={Elliptic partial differential equations of second order},
  publisher={Springer-Verlag},
  address={Berlin},
  date={1977},
}

\bib{Juut07}{article}{
  author={Juutinen, {P.}},
  title={Principal eigenvalue of a very badly degenerate operator and applications},
  date={2007},
  journal={J. Differential Equations},
  volume={236},
  number={2},
  pages={532\ndash 550},
}

\bib{JK}{article}{
  author={Juutinen, {P.}},
  author={Kawohl, {B.}},
  title={On the evolution governed by the infinity {L}aplacian},
  date={2006},
  issn={0025-5831},
  journal={Math. Ann.},
  volume={335},
  number={4},
  pages={819\ndash 851},
  url={https://doi.org/10.1007/s00208-006-0766-3},
  review={\MR {2232018}},
}

\bib{K0}{article}{
  author={Kawohl, B.},
  title={Variational versus PDE-based approaches in mathematical image processing},
  conference={ title={Singularities in PDE and the calculus of variations}, },
  book={ series={CRM Proc. Lecture Notes}, volume={44}, publisher={Amer. Math. Soc., Providence, RI}, },
  date={2008},
  pages={113--126},
  review={\MR {2528737}},
}

\bib{K11}{article}{
  author={Kawohl, B.},
  title={Variations on the $p$-Laplacian},
  conference={ title={Nonlinear elliptic partial differential equations}, },
  book={ series={Contemp. Math.}, volume={540}, publisher={Amer. Math. Soc., Providence, RI}, },
  date={2011},
  pages={35--46},
  review={\MR {2807407}},
  doi={10.1090/conm/540/10657},
}

\bib{KH}{article}{
  author={Kawohl, {B.}},
  author={Hor\'{a}k, {J.}},
  title={On the geometry of the {$p$}-{L}aplacian operator},
  date={2017},
  issn={1937-1632},
  journal={Discrete Contin. Dyn. Syst. Ser. S},
  volume={10},
  number={4},
  pages={799\ndash 813},
  url={https://doi.org/10.3934/dcdss.2017040},
  review={\MR {3640538}},
}

\bib{KKK}{article}{
  author={Kawohl, {B.}},
  author={Kr\"{o}mer, {S.}},
  author={Kurtz, {J.}},
  title={Radial eigenfunctions for the game-theoretic {$p$}-{L}aplacian on a ball},
  date={2014},
  journal={Differential Integral Equations},
  volume={27},
  number={7-8},
  pages={659\ndash 670},
}

\bib{kuhn}{article}{
  author={K\"{u}hn, {M.}},
  title={Power- and log-concavity of viscosity solutions to some elliptic {D}irichlet problems},
  date={2018},
  journal={Commun. Pure Appl. Anal.},
  volume={17},
  number={6},
  pages={2773\ndash 2788},
}

\bib{LuWang2008}{incollection}{
  author={Lu, {G.}},
  author={Wang, {P.}},
  title={A uniqueness theorem for degenerate elliptic equations},
  date={2008},
  booktitle={Geometric methods in {PDE}'s},
  series={Lect. Notes Semin. Interdiscip. Mat.},
  volume={7},
  publisher={Semin. Interdiscip. Mat. (S.I.M.), Potenza},
  pages={207\ndash 222},
  review={\MR {2605157}},
}

\bib{MPR1}{article}{
  author={Mart\'{i}nez-Aparicio, {P.J.}},
  author={P\'{e}rez-Llanos, {M.}},
  author={Rossi, {J.D.}},
  title={The limit as {$p\rightarrow \infty $} for the eigenvalue problem of the 1-homogeneous {$p$}-{L}aplacian},
  date={2014},
  journal={Rev. Mat. Complut.},
  volume={27},
  number={1},
  pages={241\ndash 258},
}

\bib{MPR2}{article}{
  author={Mart\'{i}nez-Aparicio, {P.J.}},
  author={P\'{e}rez-Llanos, {M.}},
  author={Rossi, {J.D.}},
  title={The sublinear problem for the 1-homogeneous {$p$}-{L}aplacian},
  date={2014},
  journal={Proc. Amer. Math. Soc.},
  volume={142},
  number={8},
  pages={2641\ndash 2648},
}

\bib{PSSW1}{article}{
  author={Peres, {Y.}},
  author={Schramm, {O.}},
  author={Sheffield, {S.}},
  author={Wilson, {D. B.}},
  title={Tug-of-war and the infinity {L}aplacian},
  date={2009},
  journal={J. Amer. Math. Soc.},
  volume={22},
  number={1},
  pages={167\ndash 210},
}

\bib{PSSW2}{article}{
  author={Peres, {Y.}},
  author={Sheffield, {S.}},
  title={Tug-of-war with noise: a game-theoretic view of the {$p$}-{L}aplacian},
  date={2008},
  journal={Duke Math. J.},
  volume={145},
  number={1},
  pages={91\ndash 120},
}

\bib{Sak}{article}{
  author={Sakaguchi, {S.}},
  title={Concavity properties of solutions to some degenerate quasilinear elliptic {D}irichlet problems},
  date={1987},
  issn={0391-173X},
  journal={Ann. Scuola Norm. Sup. Pisa Cl. Sci. (4)},
  volume={14},
  number={3},
  pages={403\ndash 421 (1988)},
  url={http://www.numdam.org/item?id=ASNSP_1987_4_14_3_403_0},
  review={\MR {951227 (89h:35133)}},
}

\end{biblist}
\end{bibdiv}

\end{document}